\nonstopmode \numberwithin{equation}{section}
\newtheorem*{theoA}{Theorem A}
\newtheorem*{theoB}{Theorem B}
\newtheorem*{theoC}{Theorem C}
\newtheorem*{lemA}{Lemma A}
\newtheorem*{lemB}{Lemma B}
\newtheorem*{lemC}{Lemma C}
\theoremstyle{plain}
\newtheorem{prop}{Proposition}
\newtheorem{ques}{Question}
\newtheorem{conj}{Conjecture}
\theoremstyle{definition}
\newtheorem{exm}{Example}[section]
\newtheorem{cor}{Corollary}[section]
\newtheorem{thm}{Theorem}[section]
\newtheorem{lem}{Lemma}[section]
\newtheorem{prob}{Problem}
\newtheorem{rem}{Remark}[section]
\theoremstyle{plain}
\newcounter{minutes}\setcounter{minutes}{\time}
\newcounter{hours}\setcounter{hours}{\time}
\newcounter {own}
\def\theown {\thesection       .\arabic{own}}
\newenvironment{pf}[1][]{%
	\vskip 3mm
	\noindent
	\ifthenelse{\equal{#1}{}}%
	{{\slshape Proof. }}%
	{{\slshape #1.} }%
}%
{\qed\bigskip}
\newcounter{alphabet}
\def\be{\begin{equation}}
	\def\ee{\end{equation}}
\newcommand{\bee}{\begin{enumerate}}
	\newcommand{\eee}{\end{enumerate}}
\newcommand{\blem}{\begin{lem}}
	\newcommand{\elem}{\end{lem}}
\newcommand{\bthm}{\begin{thm}}
	\newcommand{\ethm}{\end{thm}}
\newcommand{\bcor}{\begin{cor}}
	\newcommand{\ecor}{\end{cor}}
\newcommand{\beg}{\begin{examp}}
	\newcommand{\eeg}{\end{examp}}
\newcommand{\begs}{\begin{examples}}
	\newcommand{\eegs}{\end{examples}}
\newcommand{\bdefn}{\begin{defn}}
	\newcommand{\edefn}{\end{defn}}
\newcommand{\bprob}{\begin{prob}}
	\newcommand{\eprob}{\end{prob}}
\newcommand{\bei}{\begin{itemize}}
	\newcommand{\eei}{\end{itemize}}
\newcommand{\bcon}{\begin{conj}}
	\newcommand{\econ}{\end{conj}}
\newcommand{\bcons}{\begin{conjs}}
	\newcommand{\econs}{\end{conjs}}
\newcommand{\bprop}{\begin{prop}}
	\newcommand{\eprop}{\end{prop}}
\newcommand{\br}{\begin{rem}}
	\newcommand{\er}{\end{rem}}
\newcommand{\brs}{\begin{rems}}
	\newcommand{\ers}{\end{rems}}
\newcommand{\bo}{\begin{obser}}
	\newcommand{\eo}{\end{obser}}
\newcommand{\bos}{\begin{obsers}}
	\newcommand{\eos}{\end{obsers}}
\newcommand{\bpf}{\begin{pf}}
	\newcommand{\epf}{\end{pf}}
\newcommand{\ba}{\begin{array}}
	\newcommand{\ea}{\end{array}}
\newcommand{\beq}{\begin{eqnarray}}
	\newcommand{\beqq}{\begin{eqnarray*}}
		\newcommand{\eeq}{\end{eqnarray}}
	\newcommand{\eeqq}{\end{eqnarray*}}
\begin{document}

\title{Existence of Solutions to Systems of General Quadratic Functional Equations in $\mathbb{C}^n$}

\author{Molla Basir Ahamed}
\address{Molla Basir Ahamed,
	Department of Mathematics,
	Jadavpur University,
	Kolkata-700032, West Bengal, India.}
\email{mbahamed.math@jadavpuruniversity.in}

\author{Sanju Mandal}
\address{Sanju Mandal,
	Department of Mathematics,
	Jadavpur University,
	Kolkata-700032, West Bengal, India.}
\email{sanjum.math.rs@jadavpuruniversity.in, sanju.math.rs@gmail.com}

\subjclass[{AMS} Subject Classification:]{39A45, 30D35, 35M30, 32W50}
\keywords{Transcendental entire solutions, Nevanlinna theory, Several complex variables, Circular-type functional equations, System of functional equations, finite order, Partial differential-difference equations}

\def\thefootnote{}
\footnotetext{ {\tiny File:~\jobname.tex,
printed: \number\year-\number\month-\number\day,
          \thehours.\ifnum\theminutes<10{0}\fi\theminutes }
} \makeatletter\def\thefootnote{\@arabic\c@footnote}\makeatother

\begin{abstract}
The main objective of this study is to investigate the existence and forms of solutions of systems of general quadratic functional equations in $\mathbb{C}^n$. By utilizing Nevanlinna theory in $\mathbb{C}^n$, we explore the existence and form of solutions for the several systems of general quadratic difference and partial differential-difference equations of the form $af^2 + 2\alpha fg + bg^2 + 2\beta f + 2\gamma g + C=0$, where $f$ and $g$ are non-constant meromorphic functions in $\mathbb{C}^n$. The obtained results in this article are improvements and generalizations of several results from [\textit{RACSAM}, \textbf{116}(8) (2022)]. Furthermore, appropriate remarks and illustrative examples are provided to validate and demonstrate the applicability of the obtained results concerning the existence and forms of solutions for such systems of equations.
\end{abstract}

\maketitle
\pagestyle{myheadings}
\markboth{Molla Basir Ahamed and Sanju Mandal}{solutions of several systems of algebraic functional equations in $\mathbb{C}^n$}

\section{\bf Introduction}
This article is devoted to exploring the transcendental solutions for several systems of general quadratic difference or partial differential-difference equations. In recent years, extensive research has focused on finding finite order entire and meromorphic solutions in one and several complex variables for the functional equation 
\begin{align*}
	f^n(z) + g^n(z)= 1\; \mbox{for integer}\; n\geq 1
\end{align*}
regard as the Fermat diophantine equation $ x^n+y^n=1 $ over functional fields, where $ n\geq 2 $ is an integer. The study of solutions and their different properties of Circular-type functional equations $ f^2(z)+g^2(z)=1 $ in $\mathbb{C}^n$ is an active area of research that has received significant attention in recent times. Before going into a detailed information of the study of such equations in $\mathbb{C}^n$, let us revisit a crucial result concerning entire and meromorphic solutions of Circular-type equations in $\mathbb{C}^n$.
\begin{theoA}(\cite{Saleeby-AM-2013})
	For $ h:\mathbb{C}^n\rightarrow \mathbb{C} $ entire, the solutions of the Circular equation $ f^2(z) +g^2(z)=1 $ are characterize as follows:
	\begin{enumerate}
		\item [(i)] the entire solutions are $ f(z)=\cos(h(z)),\;g(z)=\sin(h(z))$;
		\item[(ii)] the meromorphic solutions are of the form $ f(z)=\frac{1-\beta^2(z)}{1+\beta^2(z)},\;g=\frac{2\beta(z)}{1+\beta^2(z)} $, with $ \beta $ being meromorphic on $\mathbb{C}^n$.
	\end{enumerate}
\end{theoA}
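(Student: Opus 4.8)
The plan is to exploit the algebraic factorization of the left-hand side and thereby reduce the problem to understanding nowhere-vanishing holomorphic functions on $\mathbb{C}^n$. First I would factor
\[
f^2 + g^2 = (f + ig)(f - ig) = 1,
\]
and introduce the auxiliary function $u := f + ig$, so that its companion satisfies $f - ig = 1/u$. Everything then hinges on the structure of $u$, and the two cases of the theorem correspond to whether $u$ is entire or merely meromorphic.

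For part (i), suppose $f$ and $g$ are entire. Then both $u = f + ig$ and $u^{-1} = f - ig$ are entire, which forces $u$ to be a nowhere-vanishing entire function on $\mathbb{C}^n$. Since $\mathbb{C}^n$ is simply connected (indeed convex), the closed holomorphic $1$-form $du/u$ admits a global primitive, so $u$ possesses a single-valued entire logarithm; writing $u = e^{ih}$ for an entire $h$, one gets $u^{-1} = e^{-ih}$ and hence
\[
f = \frac{u + u^{-1}}{2} = \cos h, \qquad g = \frac{u - u^{-1}}{2i} = \sin h.
\]
The converse is the immediate identity $\cos^2 h + \sin^2 h = 1$.

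For part (ii), suppose $f$ and $g$ are meromorphic. The relation $u \cdot u^{-1} = 1$ still holds, with $u = f + ig$ now meromorphic and not identically zero. I would introduce the M\"obius change of variable $\beta := i(1 - u)/(1 + u)$, which is again meromorphic on $\mathbb{C}^n$ and whose inverse is $u = (1 + i\beta)/(1 - i\beta)$. Substituting this expression for $u$ into $f = (u + u^{-1})/2$ and $g = (u - u^{-1})/(2i)$ and simplifying the resulting rational expressions yields
\[
f = \frac{1 - \beta^2}{1 + \beta^2}, \qquad g = \frac{2\beta}{1 + \beta^2},
\]
the claimed rational parametrization; the converse is once more a direct computation giving $f^2 + g^2 = 1$.

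The main obstacle, and the only place where the several-variables setting genuinely enters, is the logarithm step in part (i): one must guarantee that a nowhere-vanishing entire function on $\mathbb{C}^n$ admits an entire logarithm. This is exactly where the topology of $\mathbb{C}^n$ is essential, namely its simple connectivity (equivalently, the vanishing of the relevant cohomology, so that the closed form $du/u$ integrates to a single-valued primitive); over a non-simply-connected base this step would fail. A secondary, purely bookkeeping difficulty in part (ii) is the treatment of the degenerate loci where $1 + u$ or $1 + \beta^2$ vanishes identically, or where poles of $u$ occur; these are handled by reading $\beta$ as a holomorphic map into $\widehat{\mathbb{C}}$ and checking the exceptional cases ($u \equiv -1$, corresponding to $f \equiv -1,\ g \equiv 0$) separately.
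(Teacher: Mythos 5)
Your proposal is correct, but note that the paper does not actually prove this statement: Theorem A is quoted as background, cited to Saleeby's 2013 paper, and is only ever \emph{used} (in the proofs of Theorems 3.1--3.3, to pass from the reduced circular equation to the $\cos h$/$\sin h$ form). So there is no internal proof to compare against; your argument has to be judged on its own, and it stands. The factorization $f^2+g^2=(f+ig)(f-ig)=1$, the observation that $u=f+ig$ is then a unit in the relevant ring, the extraction of an entire logarithm of a nowhere-vanishing entire function on the simply connected space $\mathbb{C}^n$, and the M\"obius substitution $u=(1+i\beta)/(1-i\beta)$ in the meromorphic case constitute exactly the classical argument behind the cited result; it is also the mechanism the paper implicitly leans on when it invokes ``Theorem A, Lemmas A and B'' in its own proofs. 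Two of your side remarks deserve emphasis: (1) the logarithm step is genuinely the only place where a property of $\mathbb{C}^n$ (simple connectivity) is needed, and it is what fails on non-simply-connected domains; (2) the degenerate solution $u\equiv -1$, i.e.\ $(f,g)\equiv(-1,0)$, is not literally of the form $\left(\frac{1-\beta^2}{1+\beta^2},\frac{2\beta}{1+\beta^2}\right)$ for a finite meromorphic $\beta$, so the statement as quoted is only exact if one reads $\beta$ as a map into $\widehat{\mathbb{C}}$ or tolerates this single exceptional constant solution --- a blemish in the quoted formulation rather than in your proof, and one you correctly flag and handle.
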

The study of general quadratic equations holds significance in mathematics and science because of its applications, making it a crucial area of exploration. Such equations commonly appear in problems associated with optimization, control theory, physics, and engineering, playing an essential role in the development of contemporary technology and science. Over the functional filed, it is difficult to find entire and meromorphic solutions for a algebraic $PDE$. Utilizing Nevanlinna theory in several complex variables and employing complex analysis methods, there was some literature focusing on solutions of the $PDEs$ and their many variants.

\subsection{Background of Circular-type equations and their solutions in several complex variables:}
In $1995$, Khavinson (\cite{Khavinson & Am. Math. Mon & 1995}) showed that entire solutions of the partial differential equation $\left(\frac{\partial u}{\partial z_1}\right)^2+\left(\frac{\partial u}{\partial z_2}\right)^2=1$ must be linear, i.e., $u(z_1, z_2)=az_1+bz_2+c$, where $a,b,c\in\mathbb{C}$ and $a^2+b^2=1$. Khavinson’s work inspired several researchers on finding the entire solutions to the equations like $ u_{z_1}^2+u_{z_2}^2=1 $ in $ \mathbb{C}^2 $. Later, in $2018$, Xu and Cao (see \cite{XU-CAO-MJM-2018, Xu-Cao-Correction-MJM-2018}) first obtained the following results by exploring the finite order solutions of Circular-type differential-difference equations in $ \mathbb{C}^2 $, and also to that of difference equations in $ \mathbb{C}^n $. 
\begin{theoB}(\cite[Theorem 1.2]{XU-CAO-MJM-2018})
Let $ c=(c_1, c_2) $ be a constant in $ \mathbb{C}^2 $. Then any transcendental entire solution with finite order of the partial difference-differential equation of the Fermat type
\begin{align*}
	\left(\frac{\partial f(z)}{\partial z_1}\right)^2+f(z+c)^2=1
\end{align*}
has the form of $ f(z_1, z_2)=\sin (Az_1+Bz_2 +H(z_2)) $, where $ A, B $ are constants on $ \mathbb{C} $ satisfying $A^2=1$ and $ Ae^{i(Ac_1 +Bc_2)}=1 $, $H(z_2)$ is a polynomial in one variable $z_2$ such that $H(z_2)=H(z_2+c_2)$. In the special case whenever $ c_2\neq 0 $, we have $ f(z_1, z_2)=\sin(Az_1+Bz_2+constant) $.
\end{theoB}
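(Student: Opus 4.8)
The plan is to reduce the equation to a statement about exponential polynomials via a factorization of the Fermat-type left-hand side, and then to read off the structure of the solution by Borel-type arguments. Write $u = \partial f/\partial z_1$ and $v(z) = f(z+c)$, so the equation becomes $u^2 + v^2 = 1$, that is $(u+iv)(u-iv) = 1$. Since $f$ is entire of finite order, so are $u$ and $v$, hence so are $u \pm iv$; as their product is the nonvanishing constant $1$, each factor is zero-free. By the several-variable Hadamard factorization theorem a zero-free entire function of finite order is the exponential of a polynomial, so I would write $u + iv = e^{p}$ and $u - iv = e^{-p}$ for a polynomial $p$ on $\mathbb{C}^2$ (note $p$ is non-constant, since otherwise $f_{z_1}$ and $f(z+c)$ would be constant, contradicting that $f$ is transcendental). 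Solving the linear system gives
\begin{align*}
\frac{\partial f}{\partial z_1}(z) = \frac{e^{p(z)} + e^{-p(z)}}{2}, \qquad f(z+c) = \frac{e^{p(z)} - e^{-p(z)}}{2i}.
\end{align*}

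The next step is to produce a single functional identity for $p$. Differentiating the second relation in $z_1$ and comparing with the first relation evaluated at $z+c$ (using $\partial_{z_1} f(z+c) = (\partial_{z_1}f)(z+c)$), I obtain
\begin{align*}
e^{p(z+c)} + e^{-p(z+c)} = -i\,\frac{\partial p}{\partial z_1}(z)\,\bigl(e^{p(z)} + e^{-p(z)}\bigr).
\end{align*}
This is an identity among the four exponentials $e^{\pm p(z+c)},\, e^{\pm p(z)}$ with polynomial coefficients, and its analysis is the heart of the argument. Comparing highest-degree homogeneous parts of the exponents shows that a ``cross'' pairing $p(z+c) = -p(z) + \text{const}$ would force the top-degree part of $p$ to vanish, so it is impossible; then Borel's theorem (the lemma on exponential polynomials in several variables) leaves only the matching $p(z+c) - p(z) = a$ for a constant $a$. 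Feeding this back and equating the coefficients of $e^{p(z)}$ and $e^{-p(z)}$ forces $e^{a} = e^{-a} = -i\,\partial p/\partial z_1$; in particular $\partial p/\partial z_1$ equals a constant $\lambda$ with $e^{2a}=1$, so $p(z) = \lambda z_1 + \phi(z_2)$ for a one-variable polynomial $\phi$.

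I would finish by solving the residual one-variable condition and translating back to trigonometric form. The relation $p(z+c) - p(z) = a$ reduces to $\phi(z_2 + c_2) - \phi(z_2) = a - \lambda c_1$, a constant, which for a polynomial forces $\phi$ to be affine when $c_2 \neq 0$ and leaves $\phi$ arbitrary when $c_2 = 0$. Since $\tfrac{1}{2i}(e^{w}-e^{-w}) = \sin(-iw)$, the formula for $f(z+c)$ yields $f(z) = \sin(-ip(z-c))$; setting $A = -i\lambda$ (so $A^2 = e^{2a} = 1$) and grouping the $z_2$-dependent and constant terms of $-ip(z-c)$ as $Bz_2 + H(z_2)$, with $H$ carrying the $c_2$-periodic remainder, gives exactly $f(z_1,z_2) = \sin(Az_1 + Bz_2 + H(z_2))$ where $H(z_2) = H(z_2 + c_2)$. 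Tracking constants, $a = \lambda c_1 + (\text{difference of }\phi) = i(Ac_1 + Bc_2)$, and combined with $A = e^{a}$ this gives $A = e^{i(Ac_1+Bc_2)}$; since $A^2 = 1$ this is equivalent to the stated $Ae^{i(Ac_1 + Bc_2)} = 1$. Finally, when $c_2 \neq 0$ a nonconstant period forces the periodic polynomial $H$ to be constant, producing the special case $f(z_1,z_2) = \sin(Az_1 + Bz_2 + \text{constant})$. I expect the main obstacle to be the rigorous Borel-theorem step that rules out higher-degree exponents and the cross-pairing; once that is in place the remainder is bookkeeping.
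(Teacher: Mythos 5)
The paper does not actually prove this statement---it is quoted as Theorem B from Xu--Cao \cite[Theorem 1.2]{XU-CAO-MJM-2018}---but your argument is correct and follows essentially the same route as the cited original and as the techniques this paper uses for its own main theorems: factor the Fermat form into two zero-free finite-order factors, apply Hadamard factorization plus P\'olya's result (the paper's Lemmas A and B) to write them as $e^{\pm p}$ with $p$ a polynomial, and use a Borel-type lemma on exponential sums (the role played by the paper's Lemma C) to match exponents. In particular, your exclusion of the cross-pairing $p(z+c)+p(z)=\mathrm{const}$ by comparing top-degree homogeneous parts, the resulting conclusion $p(z)=\lambda z_1+\phi(z_2)$ with $e^{2a}=1$, and the bookkeeping that yields $A^2=1$ and $Ae^{i(Ac_1+Bc_2)}=1$ are all sound.
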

\begin{theoC}(\cite[Theorem 1.4]{XU-CAO-MJM-2018})
	Let $ c\in\mathbb{C}^n\setminus\{0\} $. Then any non-constant entire solution with finite order of the equation
	\begin{align*}
		f^2(z)+f^2(z+c)=1
	\end{align*}
has the form $ f(z)=\cos\left(L(z)+B\right) $, where $ L $ is a linear function of the form $ L(z)=a_1z_1+\cdots+a_nc_n $ on $ \mathbb{C}^n $ such that $ L(c)=-\frac{\pi}{2}-2k\pi $, $ k $ is an integer, and $ B $ is a constant in $ \mathbb{C} $.
\end{theoC}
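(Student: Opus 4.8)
The plan is to reduce the one-function shift equation to the symmetric circular equation covered by Theorem~A, and then to exploit the shift structure together with the finite-order hypothesis. Setting $g(z):=f(z+c)$, the hypothesis $f^2(z)+f^2(z+c)=1$ is exactly $f^2+g^2=1$ with $f,g$ entire on $\mathbb{C}^n$, so Theorem~A(i) supplies an entire function $h$ on $\mathbb{C}^n$ with $f(z)=\cos h(z)$ and $f(z+c)=g(z)=\sin h(z)$. First I would upgrade $h$ to a polynomial: the combination $e^{ih(z)}=\cos h(z)+i\sin h(z)=f(z)+i\,f(z+c)$ is a zero-free entire function, and since $f$ has finite order and a translation preserves the order, $f(z)+i f(z+c)$ is entire of finite order. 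Invoking the $\mathbb{C}^n$-analogue of Hadamard factorization for zero-free finite-order entire functions, I can write $e^{ih}=e^{Q}$ with $Q$ a polynomial; because $\mathbb{C}^n$ is connected this forces $h=-iQ+\text{const}$, so $h$ is a polynomial.

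Next I would extract a functional equation for $h$ alone. Evaluating $f=\cos h$ at $z+c$ gives $f(z+c)=\cos h(z+c)$, while Theorem~A gives $f(z+c)=\sin h(z)=\cos\!\big(h(z)-\tfrac{\pi}{2}\big)$; hence $\cos h(z+c)=\cos\!\big(h(z)-\tfrac{\pi}{2}\big)$ identically. Using $\cos X-\cos Y=-2\sin\tfrac{X+Y}{2}\sin\tfrac{X-Y}{2}$, the difference vanishes identically, and since the ring of entire functions on $\mathbb{C}^n$ is an integral domain one of the two sine factors is identically zero. As $\sin$ of an entire function vanishes identically only when that function is a constant in $\pi\mathbb{Z}$, I obtain exactly two cases: either $h(z+c)+h(z)$ is constant, or $h(z+c)-h(z)$ is constant.

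I would dispose of the first case by a degree count: writing $h$ in homogeneous parts, the top-degree part of $h(z+c)+h(z)$ equals twice the top-degree part of $h$, so if $h(z+c)+h(z)$ is constant then $h$ is constant and $f$ is constant, contrary to hypothesis. In the surviving case $h(z+c)-h(z)=d_0$ with $d_0\equiv-\tfrac{\pi}{2}\pmod{2\pi}$, and it remains to show $h$ is affine. Once $h=L+B$ is established with $L$ linear, linearity gives $h(z+c)-h(z)=L(c)$, whence $L(c)=d_0=-\tfrac{\pi}{2}-2k\pi$ for an integer $k$ and $f(z)=\cos(L(z)+B)$, as claimed.

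The main obstacle is precisely this last reduction: showing that a polynomial $h$ on $\mathbb{C}^n$ with $h(z+c)-h(z)\equiv\text{const}$ must have degree one. Decomposing $h$ into homogeneous components and comparing graded pieces, the degree-$(d-1)$ part of $h(z+c)-h(z)$ is the directional derivative $D_c h_d$ of the top part $h_d$; forcing $h_d=0$ for every $d\ge 2$ is the crux, and it is here that the interaction between the fixed shift direction $c$ and the several-variable structure must be handled carefully (for $n=1$ the conclusion is immediate, since then $h(z+c)-h(z)$ has degree $\deg h-1$, so $\deg h\le 1$). I expect the finite-order/polynomial reduction and the integral-domain splitting to be routine, with essentially all the difficulty concentrated in this final linearization step.
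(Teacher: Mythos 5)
Your opening moves are fine and match the toolkit this paper itself uses for its own theorems: Theorem A to write $f=\cos h$, the Ronkin--Stoll factorization (Lemma A) plus P\'olya's theorem (Lemma B) to upgrade $h$ to a polynomial, then an integral-domain splitting of $\cos h(z+c)=\cos\bigl(h(z)-\tfrac{\pi}{2}\bigr)$ into the two cases $h(z+c)\pm h(z)\equiv\text{const}$, with the ``$+$'' case killed by a degree count. The genuine gap is exactly the step you flag as the crux and leave open: the claim that a polynomial $h$ on $\mathbb{C}^n$ with $h(z+c)-h(z)\equiv\text{const}$ must be affine is \emph{false} for $n\geq 2$, so no refinement of the graded-piece argument can close it. Comparing homogeneous parts only gives $D_ch_d=0$ for the top part $h_d$, i.e.\ $h_d$ is constant in the direction $c$, not $h_d=0$: in $\mathbb{C}^2$ the polynomial $h(z)=L(z)+(c_1z_2-c_2z_1)^m$ satisfies $h(z+c)-h(z)=L(c)$ identically for every $m\geq 2$, since $c_1z_2-c_2z_1$ is invariant under the shift $z\mapsto z+c$. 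This is precisely the point of the paper's ``Observations'' subsection in Section 1, which also exhibits still wilder examples in $\mathbb{C}^3$ such as $L(z)+(c_1z_2-c_2z_1)^2+(c_2z_3-c_3z_2)^2$.

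As a consequence, $f(z)=\cos\bigl(L(z)+(c_1z_2-c_2z_1)^m+B\bigr)$ with $L(c)=-\tfrac{\pi}{2}-2k\pi$ is a non-constant, finite-order entire solution of $f^2(z)+f^2(z+c)=1$ in $\mathbb{C}^2$ that is not of the form $\cos(L(z)+B)$, so the target statement as quoted (the original Xu--Cao Theorem 1.4) cannot be proved this way because it is the uncorrected version; this is why the paper also cites the correction by Xu and Cao, and why the paper's own main results state their solutions with $\gamma(z)=L(z)+\Psi(z)$, where $\Psi$ is the shift-periodic polynomial introduced at the start of Section 3, rather than with a purely linear exponent. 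Your plan is salvageable only in one of two ways: either weaken the conclusion to $f(z)=\cos\bigl(L(z)+\Psi(z)+B\bigr)$ with $\Psi(z+c)=\Psi(z)$ (which your argument essentially already proves, since the ``$-$'' case gives exactly $h(z+c)-h(z)$ constant and hence $h=L+\Psi+B$), or restrict to $n=1$, where your degree count is valid because differencing a nonconstant polynomial in one variable drops the degree by exactly one.
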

Motivated by the above discussion, and for further study in the topic, it is natural to raise the question.  
\begin{ques}\label{qn-1.1}
How to describe the entire solutions for the system of the Circular-type differential or differential-difference equations in $ \mathbb{C}^2$?
\end{ques}
Nevanlinna theory in one and several complex variables is a useful tool for the study of existence as well as growth properties of meromorphic solutions of different complex functional equations. In response to the Question \ref{qn-1.1}, solution of the following system equations
\begin{align*}
	\begin{cases}
		f_1(z_1, z_2)^2+f_2(z_1+c_1, z_2+c_2)^2=1,\vspace{1.2mm}\\
		f_2(z_1, z_2)^2+f_1(z_1+c_1, z_2+c_2)^2=1
	\end{cases}
\end{align*}
and 
\begin{align*}
	\begin{cases}
		\left(\dfrac{\partial f_1(z_1, z_2)}{\partial z_1} \right)^2+[f_2(z_1+c_1, z_2+c_2)-f_1(z_1, z_2)]^2=1,\vspace{1.2mm}\\
		\left(\dfrac{\partial f_2(z_1, z_2)}{\partial z_1} \right)^2+[f_1(z_1+c_1, z_2+c_2)-f_2(z_1, z_2)]^2=1
	\end{cases}
\end{align*}
are obtained in \cite{Xu-Liu-Li-JMAA-2020} explicitly using Nevanlinna theory in several complex variables. 
 
\subsection{Background of quadratic trinomial equations and their solutions in one and several complex variables:}
Utilizing the difference analogues of Nevanlinna theory in $\mathbb{C}$ by Halburd and Korhonen \cite{Halburd & Korhonen & 2006}, Chiang and Feng \cite{Chiang & Feng & 2008}, and subsequently in $\mathbb{C}^n$, respectively, many scholars have tried to characterize all the meromorphic solutions of Circular-type difference or differential-difference equations (see \cite{Ahamed-Allu-AMP-2023, Zheng-Xu & Analysis math & 2021, Xu-RMJM-2021, XU-CAO-MJM-2018, Ahamed-Mandal-JKMS-2024, Mandal-Ahamed-COAT-2024}). However, Saleeby (see \cite{Saleeby-AM-2013}) made some elementary observation on right factors of meromorphic function to describe complex analytic solutions to the quadratic trinomial functional equation $ f^2(z)+2\alpha f(z)g(z) +g^2(z) =1 $, $ \alpha^2\neq1$ constant in $ \mathbb{C} $ and obtain a result associate with the partial differential equations $ u^2_x+2\alpha u_x u_y +u^2_y=1 $. Later, in \cite{Liu & Yang & 2016}, Liu and Yang studied the existence and form of solutions of some quadratic trinomial functional equations and proved that if $ \alpha\neq \pm 1,0 $, then quadratic trinomial differential equation $ f(z)^2+2\alpha f(z)f^{\prime}(z)+f^{\prime}(z)^2=1 $ in $ \mathbb{C} $ has no transcendental meromorphic solutions, whereas the difference functional equation $ f(z)^2 +2\alpha f(z)f(z+c)+f(z+c)^2=1 $ in $ \mathbb{C} $ must have transcendental entire solutions of order equal to one. Recently, Xu and Jiang \cite{XU-JIANG-RACSAM-2022} obtained the forms of solutions for several systems of quadratic trinomial functional equations related to the above system in $\mathbb{C}^2$ are following
\begin{align*}
	\begin{cases}
		f_1(z_1, z_2)^2+2\alpha f_1(z_1, z_2)f_2(z_1+c_1, z_2+c_2)+f_2(z_1+c_1, z_2+c_2)^2=1,\vspace{1.2mm}\\
		f_2(z_1, z_2)^2+2\alpha f_2(z_1, z_2)f_1(z_1+c_1, z_2+c_2)+f_1(z_1+c_1, z_2+c_2)^2=1
	\end{cases}
\end{align*}
and 
\begin{align*}
	\begin{cases}
		f_1(z_1+c_1, z_2+c_2)^2 +2\alpha f_1(z_1+c_1, z_2+c_2)\dfrac{\partial f_2(z_1, z_2)}{\partial z_1} + \left(\dfrac{\partial f_2(z_1, z_2)}{\partial z_1} \right)^2=1, \vspace{1.2mm}\\
		f_2(z_1+c_1, z_2+c_2)^2 +2\alpha f_2(z_1+c_1, z_2+c_2)\dfrac{\partial f_1(z_1, z_2)}{\partial z_1} + \left(\dfrac{\partial f_1(z_1, z_2)}{\partial z_1} \right)^2=1,
	\end{cases}
\end{align*}
where $\alpha^2\neq 0,1$; $\alpha\in\mathbb{C}$. However, there have been several studies that focus on finding solutions of single equations or systems of equations (see \cite{Gao-JMAA-2016, Gauthier-AM-1972, Gundersen-PEMS-2020, Saleeby-JMAA-2015, Ishizaki-Korhonen-CA-2018, Korhonen-Zhang-CA-2020,Ahamed-Mandal-DM-2025}).

\subsection{System of quadratic equations and their solutions in several complex variables:}
The system of general quadratic equations has been an interesting subject in the field of complex analysis in connection with the application of Nevanlinna theory. It appears that the study of systems of the general quadratic equations which is a general setting of circular-type as well as quadratic trinomial-type functional equations in several complex variables has not been addressed in the literature before. In this article,  using Nevanlinna theory in several complex variables, our primary objective is to explore the existence as well as form of the solutions with finite order of general quadratic functional equations in $ \mathbb{C}^n $. \vspace{1.2mm}

Since a non-constant polynomials in $\mathbb{C}^n$ may exhibit periodic behavior, and we know that polynomials play a crucial role in the solutions of partial differential or difference equations in $\mathbb{C}^n$, the characterization of solutions in $\mathbb{C}^n$ will naturally differ from that in $\mathbb{C}$.  Further, we observe that the Circular equation $ f^2+g^2=1 $ and quadratic trinomial functional equation $ f^2+2\alpha fg+g^2=1 $ are both particular cases of general quadratic functional equation of the form
\begin{align*}
   \mathcal{A}(f, g):=af^2+2\alpha fg+bg^2+2\beta f+2\gamma g+C=0
\end{align*}
in $ \mathbb{C}^n $. The most general form of these equations is represented by quadratic equations with arbitrary coefficients. Therefore, the solutions for systems of general quadratic equations provide a broader perspective compared to solutions of these circular and quadratic trinomial equations in $\mathbb{C}^n$. \vspace{2mm}

We would like to highlight that while there has been extensive research on Fermat-type equations of circular or quadratic trinomial type exploring their solutions in one and several complex variables, the corresponding equations for general quadratic equations (GQE) have not received as much attention from researchers to date. Consequently, the nature of solutions for systems of general quadratic equations remains unknown. This lack of attention serves as the main motivation for our current article. Specifically, our focus is on finding solutions for systems of GQE in se veral complex variables, which also allows us to handle single GQE in both one and several complex variables. Therefore, our primary objective is to address this gap in the existing literature and contribute to the understanding of solutions for systems of GQE in $\mathbb{C}^n$.\vspace{2mm}

The following questions naturally arise for further study of quadratic trinomial functional equations in $ \mathbb{C}^n \;(n\geq 2)$, and seeking possible answers to these questions will be our main objective of the article. Although many equations with different settings can be investigated for their solutions, in this article, we consider three types of general quadratic functional equations in $\mathbb{C}^n$ as follows:
\begin{align*}
	\begin{cases}
		\mathcal{A}\left(f_1(z),\; f_2(z+c)\right)=0\\
		\mathcal{A}\left(f_2(z),\; f_1(z+c)\right)=0,\;
	\end{cases} 
\end{align*} 
\begin{align*}
	\begin{cases}
		\mathcal{A}\left(f_1(z),\; \dfrac{\partial f_2(z)}{\partial z_1}\right)=0\vspace{2mm}\\
		\mathcal{A}\left(f_2(z),\; \dfrac{\partial f_1(z)}{\partial z_1}\right)=0
	\end{cases} \;\;\;\; \mbox{and} \;\;\;\;\;\;\;\;
	\begin{cases}
		\mathcal{A}\left(f_1(z+c),\; \dfrac{\partial f_2(z)}{\partial z_1}\right)=0\vspace{2mm}\\
		\mathcal{A}\left(f_2(z+c),\; \dfrac{\partial f_1(z)}{\partial z_1}\right)=0.
	\end{cases}
\end{align*}
The above discussions thus lead us to raise a natural question.
\begin{ques}\label{q-1.1}
What can be said about the existence and forms of the solutions of the above system of general quadratic functional equations in $\mathbb{C}^n$, for $n\geq 2$?
\end{ques}

Motivated by the question above, this article focuses on describing transcendental solutions for various general quadratic difference equations and partial differential-difference equations in $\mathbb{C}^n$. This article aims to explore the existence and precise form of solutions for the mentioned system of general quadratic functional equations, utilizing Nevanlinna theory and the difference analog of Nevanlinna theory in $\mathbb{C}^n$ (see \cite{Cao-Korhonen-JMAA-2016, Korhonen-CMFT-2012,Saleeby-JMAA-2015}). These theories will provide a comprehensive answer to Question \ref{q-1.1}. Henceforth, throughout the article, we assume that the corresponding companion matrices for the equations are non-singular, \textit{i.e.,} $\Delta=:abC+2\alpha\beta\gamma-a\gamma^2 -b\beta^2 -C\alpha^2 \neq 0$. Otherwise, each equation in the system can be linearly factorized, which will eventually reveal that the solution is a constant. This suggests that further investigation is unnecessary. \vspace{2mm}

\subsection{Observations:} In $\mathbb{C}^2$, we see that if $p(z +c) - p(z)= \xi $, where $\xi$ is a constant in $\mathbb{C}$. Then, it follows that $p(z)= L(z) + H(s) + A_3$, where $L(z)= a_1 z_1 + a_2 z_2 $ and $H(s)$ is a polynomial in $s:= c_1 z_2 -c_2 z_1$ and $\xi=a_1 c_1 + a_2 c_2$. But, in $\mathbb{C}^3$, let $p(z)=A_1z_1+A_2z_2+A_3z_3+ (c_1z_2-c_2z_1)^2 +(c_2z_3-c_3z_2)^2$, where $c_1c_2c_3\neq 0$. Obviously, $p(z+c)- p(z)=\xi$, where $\xi=A_1c_1+A_2c_2+A_3c_3$. But, $p(z)$ cannot be represented as $p(z) = L(z) + H(s) + \xi$. Otherwise, if $A_1z_1+ A_2z_2+A_3z_3+(c_1z_2-c_2z_1)^2+(c_2z_3-c_3z_2)^2=L(z) + H(s_1) +\xi$ then $H(s_1)=B_2(\beta_1z_1+\beta_2z_2+\beta_3z_3)^2+B_1(\beta_1z_1+ \beta_2z_2+\beta_3z_3)+B_0$. By comparing the coefficients of $z_1z_2,z_2z_3,z_1z_3$ of both sides, it follows that
\begin{align}\label{1-4}
	2 B_2 \beta_1\beta_2=-2c_1c_2
\end{align}
\begin{align*}
	2B_2\beta_2\beta_3=-2c_2c_3
\end{align*}
\begin{align}\label{1-6}
	2B_2 \beta_1\beta_3=0.
\end{align}
Obviously, $B_2\neq0$. By \eqref{1-6}, it yields that $\beta_1=0$ or $\beta_3=0$. If $\beta_1=0$, then it follows from \eqref{1-4} that $c_1=0$ or $c_2=0$, this is a contradiction with the assumption of $c_1c_2c_3\neq0$. If $\beta_3=0$, this is also a contradiction with the assumption. Therefore, investigation of solution for $\mathbb{C}^n \; (n\geq 2)$ are very significant as we see that the characterization of the solutions are different. \vspace{2mm}

The structure of the article is as follows: In Section 2, we provide a comprehensive review of essential lemmas and notations. In Section 3, we present the main results along with their proofs. These results correspond to Question 2, and we include illustrative examples for each to explore the precise nature of transcendental entire solutions with finite order. Additionally, we offer several remarks that support our improvements and generalizations of the results.

\section{\bf Some notations and Key lemmas}
This section introduces the notations and key lemmas that will be utilized throughout the manuscript. Let $ z+c=(z_1+c_1,\ldots,z_n+c_n) $ and $ tz=(tz_1,\ldots,tz_n) $ for any $ z=(z_1,\ldots,z_n), c=(c_1,\ldots,c_n)\in\mathbb{C}^n $ and $ t\in\mathbb{C} $. Let us fix several notations which will be used subsequently: 
\begin{align*}
	\Delta:=\begin{vmatrix}
		a & \alpha & \beta\\
		\alpha & b & \gamma\\
		\beta & \gamma & C
	\end{vmatrix}=abC+2\alpha\beta\gamma-a\gamma^2-b\beta^2-C\alpha^2\;(\neq 0),\; D:=\begin{vmatrix}
	a & \alpha\\
	\alpha & b
\end{vmatrix}=ab-\alpha^2\;(\neq 0),
\end{align*}
\begin{align*}
	\begin{cases}
		\xi^{\pm}_1:=\dfrac{2\alpha}{\sqrt{\left((b-a)\pm\sqrt{(a-b)^2 +4\alpha^2}\right)^2 +4\alpha^2}},\vspace{1.2mm}\\ \eta^{\pm}_1:=\dfrac{(b-a)\pm\sqrt{(a-b)^2 +4\alpha^2}}{\sqrt{\left((b-a)\pm\sqrt{(a-b)^2 +4\alpha^2}\right)^2 +4\alpha^2}}.
	\end{cases}
\end{align*}
Also, we denote
\begin{align*}
   A^{\pm}:=\dfrac{(a+b)\pm\sqrt{(a-b)^2 +4\alpha^2}}{2},\;\;\;\; B^{\mp}:=\dfrac{(a+b)\mp\sqrt{(a-b)^2 +4\alpha^2}}{2}
\end{align*}
and
\begin{align*}
	\begin{cases}
		D_{11}:=\dfrac{\xi^{\pm}_1}{\sqrt{\frac{DA^{\pm}}{-\Delta}}},\;\; D_{12}:=\dfrac{\eta^{\pm}_1}{\sqrt{\frac{DB^{\mp}}{-\Delta}}}\;\;\mbox{and}\;\; T_1:=\dfrac{\alpha\gamma-b\beta}{ab-\alpha^2},\vspace{1.5mm}\\ E_{11}:=\dfrac{\eta^{\pm}_1}{\sqrt{\frac{DA^{\pm}}{-\Delta}}},\;\; E_{12}:=\dfrac{\xi^{\pm}_1}{\sqrt{\frac{DB^{\mp}}{-\Delta}}}\;\;\mbox{and}\;\; T_2:=\dfrac{\alpha\beta-a\gamma}{ab-\alpha^2}.
	\end{cases}
\end{align*}
Furthermore, for some consequences of the main results, we introduce some notations:
\begin{align*}
	\begin{cases}
		K_{11}:=\sqrt{(a+b)\pm\sqrt{(a-b)^2 + 4\alpha^2}},\;\; K_{12}:=\sqrt{\left((b-a)\pm\sqrt{(a-b)^2+4\alpha^2}\right)^2 +4\alpha^2},\vspace{1.5mm}\\ K_{13}:=\sqrt{(a+b)\mp\sqrt{(a-b)^2 + 4\alpha^2}}, \;\; K_{14}:=(b-a)\pm\sqrt{(a-b)^2+4\alpha^2)},\vspace{1.5mm}\\ A_{11}:=\dfrac{2\sqrt{2}\alpha}{K_{11}K_{12}},\;\;\;\;\; A_{12}:=\dfrac{\sqrt{2}K_{14}}{K_{12}K_{13}},\;\;\;\; B_{11}:=\dfrac{\sqrt{2}K_{14}}{K_{11}K_{12}},\;\;\;\; B_{12}:=\dfrac{2\sqrt{2}\alpha}{K_{12}K_{13}}.
	\end{cases}
\end{align*}\vspace{2mm}

In this study, Nevanlinna's theory for meromorphic functions plays a significant role in ensuring the existence and form of solutions. We assume familiarity with the basic  Nevanlinna  theory for meromorphic functions in several complex variables (see \cite{Yang-Yi-2006, Yang-SSP-1993,Hu-Li-Yang-2003} for details). \vspace{2mm}

For $z=(z_1,\ldots,z_n)\in\mathbb{C}^n$, set $||z||^2=\sum_{j=1}^{n} |z_j|^2$. For $r>0$, define 
\begin{align*}
	B_n(r)=\{z\in\mathbb{C}^n : |z|< r\}, \hspace{0.5cm} S_n(r)=\{z\in\mathbb{C}^n : |z|=r\}.
\end{align*}
Let $d=\partial +\overline{\partial}$, $d^c=(4\pi\sqrt{-1})^{-1} (\partial -\overline{\partial})$, then $dd^c=\frac{\sqrt{-1}}{2\pi} \partial\overline{\partial}$. Write
\begin{align*}
	\sigma_n(z):=(dd^c||z||^2)^{n-1}, \hspace{0.5cm} \eta_n(z):=d^c\log||z||^2\wedge (dd^c\log||z||^2)^{n-1}
\end{align*}
for $z\in\mathbb{C}^n\setminus\{0\}$. For a meromorphic function $f$ on
$\mathbb{C}^n$, $a\in\mathbb{C}\cup\{\infty\}$, let $\nu^{0}_{f-a}$ be the zero divisor of $f-a$. Set 
\begin{align*}
	\nu(r)=\sup \nu^{0}_{f-a} \cap B_n(t), \hspace{0.5cm} \nu^{[M]}(r) :=\min\{\nu(r),M\}
\end{align*}
\begin{align*}
	n_f\left(t,\frac{1}{f-a}\right)=\begin{cases}
		\int_{\nu(r)} \nu^{0}_{f-a} \sigma_n(z),\hspace{0.5cm} \mbox{if}\hspace{0.5cm} n\geq 2,
		\vspace{2mm}\\ \sum_{|z|\leq t} \nu^{0}_{f-a}(z), \hspace{0.5cm} \mbox{if}\hspace{0.5cm} n=1.
	\end{cases}
\end{align*}
\begin{align*}
	n^{[M]}_f\left(t,\frac{1}{f-a}\right)=\begin{cases}
		\int_{\nu^{[M]}(r)} \nu^{0}_{f-a} \sigma_n(z),\hspace{1.2cm} \mbox{if}\hspace{0.5cm} n\geq 2,
		\vspace{2mm}\\ \sum_{|z|\leq t} \min\{\nu^{0}_{f-a},M\}, \hspace{0.5cm} \mbox{if}\hspace{0.5cm} n=1.
	\end{cases}
\end{align*}
Whenever $M=1$, we usually use the notation $\overline{n_f}\left(r, \frac{1}{f-a}\right)$ instead of $n^{[M]}_f\left(t,\frac{1}{f-a} \right)$. We define the counting function and reduced counting function by
\begin{align*}
	N\left(r,\frac{1}{f-a}\right)=\int_{0}^{r} \frac{n_f\left(t,\frac{1}{f-a}\right)}{t^{2n-1}} dt
\end{align*}
and
\begin{align*}
	\overline{N}\left(r,\frac{1}{f-a}\right)=\int_{0}^{r} \frac{\overline{n_f}\left(t,\frac{1}{f-a}\right)}{t^{2n-1}} dt
\end{align*}
respectively. Now, we define the proximity function by
\begin{align*}
	m\left(r,\frac{1}{f-a}\right)=\int_{S_n(r)}\log^{+}\frac{1}{|f(z)-a|}\eta_n(z),\hspace{0.5cm}\mbox{if}
	\hspace{0.5cm} a\neq\infty
\end{align*}
and
\begin{align*}
	m(r,f)=\int_{S_n(r)}\log^{+}|f(z)|\eta_n(z),\hspace{0.5cm}\mbox{if}
	\hspace{0.5cm} a=\infty.
\end{align*}
The Nevanlinna characteristic function of f is defined by
\begin{align*}
	T(r,f)=m(r,f)+N(r,f).
\end{align*}
Define the order of $ f $ by
\begin{align*}
	\rho(f):=\overline{\lim_{r\rightarrow\infty}}\dfrac{\log^{+}T(r,f)}{\log r}.
\end{align*}\vspace{2mm}

First, we recall here some necessary lemmas which will play key roles in proving the main results of the article.
\begin{lemA}(\cite{Ronkin_AMS-1974,Stoll-AMS-1974})
For any entire function $ F $ on $ \mathbb{C}^n $, $ F(0)\neq 0 $ and put $\rho(n_F)=\rho < \infty $, where $ \rho(n_F) $ denotes be the order of the counting function of zeros of $ F $. Then there exist a canonical function $ f_F $ and a function $ g_F \in\mathbb{C}^n $ such that $ F(z) =f_F (z)e^{g_F (z)} $. For the special case $ n = 1 $, $ f_F $ is the canonical product of Weierstrass.
\end{lemA}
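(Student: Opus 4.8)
The plan is to reduce the statement to two classical facts in several complex variables: the fact that a divisor of finite order on $\mathbb{C}^n$ admits a canonical product representing it, and the triviality of nowhere-vanishing holomorphic functions on the contractible space $\mathbb{C}^n$. \emph{First} I would pass from $F$ to its zero data. Since $F$ is entire with $F(0)\neq 0$, its zero set is an analytic hypersurface not meeting the origin, and counted with multiplicities it defines a non-negative divisor $\nu:=\nu^0_F$. The hypothesis $\rho(n_F)=\rho<\infty$ says precisely that the counting function of $\nu$ grows at polynomial rate $\rho$, so $\nu$ is a divisor of finite order.

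\emph{Second} I would construct the canonical function $f_F$ attached to $\nu$. Let $q=\lfloor\rho\rfloor$ be the genus. Using the Lelong--Poincar\'e formula, the current $dd^c\log|F|$ is represented by integration along $\nu$, and one builds $f_F$ as a canonical product: an entire function whose logarithm is obtained by integrating a Weierstrass-type primary-factor kernel $E_q$ against the divisor $\nu$. The finite-order hypothesis guarantees both convergence of this canonical product for the genus $q$ and the sharp growth estimate $\rho(f_F)=\rho$; by construction $\nu^0_{f_F}=\nu=\nu^0_F$. For $n=1$ the integration against the divisor collapses to the ordinary product $\prod_k E_q(z/a_k)$ over the zeros $a_k$ of $F$, i.e. the classical Weierstrass canonical product, which accounts for the final sentence of the statement.

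\emph{Third} I would form the quotient $\Phi:=F/f_F$. Since $F$ and $f_F$ carry identical zero divisors, $\Phi$ is holomorphic on all of $\mathbb{C}^n$ and vanishes nowhere. Because $\mathbb{C}^n$ is contractible, hence simply connected, the closed holomorphic form $d\Phi/\Phi$ is exact and admits a holomorphic primitive; equivalently, the nowhere-vanishing entire function $\Phi$ has a single-valued holomorphic logarithm. Thus there is an entire $g_F$ on $\mathbb{C}^n$ with $\Phi=e^{g_F}$, giving $F=f_F e^{g_F}$ as required.

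The main obstacle is the second step: constructing the canonical function and proving that its order equals $\rho(n_F)$ exactly. In one variable this is routine, but in $\mathbb{C}^n$ one must work with divisors rather than discrete zero sets and control the growth of a potential defined by an integral against $\nu$; verifying convergence of the canonical product at the correct genus $q$ and that no growth is gained or lost, so that $\rho(f_F)=\rho$, is the technical heart, and is exactly the content established by Ronkin and Stoll. The remaining steps, namely passing to the quotient and extracting the logarithm, are soft consequences of the function theory of $\mathbb{C}^n$.
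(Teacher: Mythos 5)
The paper never proves Lemma A: it is imported verbatim, with its proof deferred entirely to the cited references of Ronkin and Stoll. So the only meaningful comparison is between your sketch and the classical argument in those sources, and your sketch is indeed that argument: pass to the zero divisor $\nu$ of $F$, build the Lelong canonical function of genus $q=\lfloor\rho\rfloor$ attached to $\nu$, and divide, using simple connectivity of $\mathbb{C}^n$ to write the zero-free quotient as $e^{g_F}$. Your third step is correct exactly as stated, and your identification of the second step as the technical heart (with honest deferral of the convergence and growth estimates to Ronkin/Stoll) matches what the paper itself does by citing rather than proving. One precision is worth recording about your second step: for $n\geq 2$ the kernel integration against $\nu$ cannot literally produce a ``logarithm'' of $f_F$, since a function with zeros admits no global single-valued logarithm; what it produces is a plurisubharmonic potential $u$ satisfying $dd^c u=[\nu]$ (the current of integration along the divisor) with the correct growth, and one must then invoke the solvability of the second Cousin problem on $\mathbb{C}^n$ (a Stein space with $H^2(\mathbb{C}^n,\mathbb{Z})=0$) to obtain an entire $f_F$ with $\log|f_F|=u$ and divisor $\nu$. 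This passage from potential to entire function is exactly where the several-variables theory departs from the one-variable Weierstrass product, and it is part of what the cited references establish; with that caveat, your outline is a faithful reconstruction of the proof the paper chose not to reproduce.
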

\begin{lemB}(\cite{Polya-JLMS-1926})
If $  g $ and $ h $ are entire functions on the complex plane $ \mathbb{C} $ and $ g(h) $ is an entire function of finite order, then there are only two possible cases: either
\begin{enumerate}
	\item [(i)] the internal function $ h $ is a polynomial and the external function $ g $ is of finite order; or
	\item[(ii)] the internal function $ h $ is not a polynomial but a function of finite order, and the external function $ g $ is of zero order.
\end{enumerate}
\end{lemB}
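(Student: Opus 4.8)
The plan is to reduce the dichotomy to a comparison of growth rates measured by the maximum modulus $M(r,\cdot)=\max_{|z|=r}|\cdot|$, and then to read off the two cases from whether the inner function $h$ is a polynomial or transcendental. Throughout I would assume $g,h$ non-constant, since otherwise $g(h)$ is constant and there is nothing to prove, and I would normalize $h(0)=0$ (replacing $g(w)$ by $g(w+h(0))$, which alters neither the order of $g$ nor the polynomial/transcendental character of $h$).

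The engine of the argument is the classical \emph{P\'olya composition inequality}. For all large $r$ it provides the two-sided estimate
\[
M\!\left(\tfrac14 M(\tfrac r2,h),\,g\right)\ \le\ M(r,g(h))\ \le\ M\!\left(M(r,h),\,g\right).
\]
The upper bound is immediate from $|g(h(z))|\le M(|h(z)|,g)\le M(M(r,h),g)$ for $|z|\le r$. The lower bound is the substantive input: it rests on the covering fact that the image of the disc $\{|z|\le r\}$ under $h$ surrounds the origin out to radius comparable to $M(\tfrac r2,h)$, so that $g(h)$ already attains on $\{|z|\le r\}$ the full circular maximum $M(\tfrac14 M(\tfrac r2,h),g)$. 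Establishing this covering statement — controlling how much of the plane $h$ sweeps out, where the univalent case is governed by a Koebe-type bound and the multivalent case by the fact that the image wraps around the origin — is the step I expect to be the main obstacle, and it is exactly the classical lemma I would quote from P\'olya's work.

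Granting the inequality, the case split follows from the elementary growth trichotomy for $M$. If $h$ is a polynomial of degree $d\ge 1$ then $\log M(r,h)\asymp d\log r$, so both sides compare $\log M(r,g(h))$ with $\log M(cr^{d},g)$; since $g(h)$ has finite order $\rho$, putting $s=cr^{d}$ yields $\log M(s,g)\lesssim s^{(\rho+1)/d}$, whence $g$ has finite order. This is case (i). If instead $h$ is transcendental, then $\log M(r,h)/\log r\to\infty$, so $M(\tfrac r2,h)$ grows faster than every power of $r$. Assuming for contradiction $\rho(g)=\sigma>0$, I would pick $s_k\to\infty$ with $\log M(s_k,g)\ge s_k^{\sigma/2}$ and solve $\tfrac14 M(\tfrac{r_k}{2},h)=s_k$; the super-polynomial growth of $M(\cdot,h)$ forces $s_k\ge r_k^{A}$ for every fixed $A$, so the lower bound gives $\log M(r_k,g(h))\ge s_k^{\sigma/2}\ge r_k^{A\sigma/2}$, contradicting the finite order of $g(h)$ once $A$ is large. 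Hence $g$ has order zero.

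Finally, in the transcendental case I must also confirm that $h$ has finite order. Since $g$ is non-constant, $\log M(s,g)\ge\log s$ for large $s$, so the lower bound specializes to $\log M(r,g(h))\ge\log M(\tfrac14 M(\tfrac r2,h),g)\ge\log M(\tfrac r2,h)-O(1)$; the finite order of $g(h)$ then forces $\log M(\tfrac r2,h)\le r^{\rho+1}$ for large $r$, i.e.\ $h$ has finite order. Combining, in the transcendental case $h$ is of finite order while $g$ is of order zero, which is precisely case (ii), and the two cases are exhaustive because $h$ is either a polynomial or transcendental. The only genuinely delicate point, as noted, is the covering estimate underlying the lower bound; every other step is a routine manipulation of orders through $M(r,\cdot)$.
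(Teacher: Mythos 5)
The paper offers no proof of Lemma~B at all: it is imported verbatim from P\'olya's 1926 paper, so there is no internal argument to compare yours against. Judged on its own terms, your proposal follows the classical route --- the two-sided composition estimate $M\bigl(\tfrac14 M(\tfrac r2,h),g\bigr)\le M(r,g(h))\le M\bigl(M(r,h),g\bigr)$ after normalizing $h(0)=0$, plus routine order bookkeeping --- and the three deductions you draw from it are all sound: the finite order of $g$ when $h$ is a polynomial (substituting $s\asymp r^{d}$ into the lower bound), the order-zero conclusion for $g$ when $h$ is transcendental (the sequence $s_k$ together with the super-polynomial growth of $M(\cdot,h)$), and the finite order of $h$ in the transcendental case. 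Two caveats. First, as you yourself note, the entire substance of the theorem lives in the lower bound of the sandwich (the covering lemma), which you quote rather than prove; so what you have is a correct reduction of the dichotomy to P\'olya's inequality, not a self-contained proof --- defensible here, since the paper itself cites the full statement from the literature, but the hard analysis is outsourced. Second, two small points need patching: the contradiction step should handle $\rho(g)=\infty$ separately (e.g.\ replace the test exponent $s_k^{\sigma/2}$ by $s_k$, since ``$\sigma/2$'' is meaningless there), and the inequality $\log M(s,g)\ge\log s-O(1)$ for non-constant entire $g$ deserves a one-line justification (convexity of $\log M(s,g)$ in $\log s$, or the Cauchy estimate argument showing $M(s,g)\ge cs$ eventually, since otherwise $g$ would be constant); relatedly, the degenerate cases where $g$ or $h$ is constant are not merely ``nothing to prove'' --- the stated dichotomy can actually fail for constant $g$ and transcendental $h$ of infinite order --- so it is better to say the classical statement tacitly assumes both functions non-constant.
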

As usual, the notation $ ``||\; P" $ means that the assertion $ P $ holds for all large $ r\in [0,\infty) $ outside a set with finite Lebesgue measure.
\begin{lemC}(\cite{Hu-Li-Yang-2003})
Let $ f_j(\not \equiv 0) $, $ j=1,2,3 $, be meromorphic functions on $ \mathbb{C}^m $ such that $ f_1 $ is not constant and $ f_1+f_2+f_3=1 $ such that 
\begin{align*}
	||\sum_{j=1}^{3}\left\{N_2\left(r,\frac{1}{f_j}\right)+2\overline{N}(r,f_j)\right\}<\lambda T(r,f_1) + O(\log^{+} T(r,f_1)),
\end{align*}
for all $ r $ outside possibly a set with finite logarithmic measure, where $ \lambda<1 $ is a positive number. Then either $ f_2=1\;\mbox{or}\; f_3=1 $.
\end{lemC}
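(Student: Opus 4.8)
The plan is to argue by contradiction: assume that neither $f_2\equiv 1$ nor $f_3\equiv 1$, and derive a violation of the counting hypothesis. The guiding observation is that the relation $f_1+f_2+f_3=1$ says precisely that the meromorphic map $F:=(f_1:f_2:f_3:1)$ sends $\mathbb{C}^m$ into the hyperplane $\{X_1+X_2+X_3=X_4\}\cong\mathbb{P}^2$ of $\mathbb{P}^3$, and that the four coordinate functions $f_1,f_2,f_3,1$ vanish along four lines in general position inside this $\mathbb{P}^2$. This is exactly the setting of a truncated Second Main Theorem for maps $\mathbb{C}^m\to\mathbb{P}^2$, available through the lemma on the logarithmic derivative in several variables (the cited references), and I would split the argument according to whether $F$ is linearly nondegenerate.

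First I would dispose of the degenerate case, where the image of $F$ lies in a proper projective subspace. Then $f_1,f_2,f_3,1$ satisfy a linear relation not proportional to $f_1+f_2+f_3-1$; eliminating $f_1=1-f_2-f_3$ converts it into a nontrivial constant-coefficient relation among $f_2,f_3,1$, so that $f_1,f_2,f_3$ all become affine functions of a single nonconstant meromorphic $\phi$ with $T(r,f_1)=T(r,\phi)+O(1)$. The classical Second Main Theorem for $\phi$ with three targets (the $\phi$-values at which $f_1,f_2,f_3$ vanish, together with the pole value) then yields $T(r,\phi)\le \sum_{j=1}^{3}N_2(r,1/f_j)+2\,\overline{N}(r,f_1)+S(r,\phi)$; crucially, the assumptions $f_2\not\equiv 1$ and $f_3\not\equiv 1$ are what guarantee that the three relevant target values are distinct and nonzero (e.g. if $f_2$ is constant then $1-f_2\neq 0$ exactly because $f_2\not\equiv 1$). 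Here $S(r,\phi)=O(\log^+T(r,\phi)+\log r)$ off a set of finite measure, so this already contradicts the hypothesis.

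In the nondegenerate case I would invoke Cartan's truncated Second Main Theorem: for the four lines $\{X_j=0\}$ $(j=1,2,3)$ and $\{X_4=0\}$ in general position,
\[
T(r,F)\le \sum_{j=1}^{3}N_2\!\left(r,\tfrac{1}{f_j}\right)+N_2^{\infty}(r,F)+S(r,F),
\]
the truncation level being $\dim\mathbb{P}^2=2$, so that the zero contributions are precisely the $N_2(r,1/f_j)$. Here $N_2^{\infty}(r,F)$ is the $2$-truncated counting function of the divisor pulled back from $\{X_4=0\}$, that is, of the common pole divisor created on passing to a reduced holomorphic representation; since each distinct pole of the $f_j$ contributes at most $2$, it is bounded by $\sum_{j=1}^{3}2\,\overline{N}(r,f_j)$. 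As $f_1$ is a coordinate quotient of $F$ we have $T(r,f_1)\le T(r,F)+O(1)$, so combining these gives
\[
T(r,f_1)\le \sum_{j=1}^{3}\Big\{N_2\!\left(r,\tfrac{1}{f_j}\right)+2\,\overline{N}(r,f_j)\Big\}+S(r,f_1).
\]
Feeding in the hypothesis $\sum_{j=1}^{3}\{N_2(r,1/f_j)+2\overline{N}(r,f_j)\}<\lambda T(r,f_1)+O(\log^+T(r,f_1))$ yields $(1-\lambda)T(r,f_1)\le O(\log^+T(r,f_1))+\log r$ off a set of finite measure; since $f_1$ is nonconstant, $T(r,f_1)\to\infty$ and the right-hand side is $o(T(r,f_1))$, which is impossible for $\lambda<1$. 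This contradiction forces $f_2\equiv 1$ or $f_3\equiv 1$.

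The hard part will be the bookkeeping in the nondegenerate step: one must pass from the meromorphic map $F$ to a reduced representation with holomorphic coordinates having no common zero, verify that the four lines are genuinely in general position, and confirm that the truncation level together with the pole divisor combine to give exactly the weights $N_2(\cdot)$ and $2\,\overline{N}(\cdot)$ rather than larger multiples. Equally delicate is checking that the several-variable logarithmic-derivative estimate underlying the Second Main Theorem produces an admissible error term $S(r,F)=O(\log^+T(r,F)+\log r)$ valid for all $r$ outside a set of finite logarithmic measure, so that it is absorbed by the ``$\,\|\,$'' in the statement; and one must make sure the degenerate-case reduction is exhaustive, covering the sub-cases in which one of the $f_j$ is itself constant.
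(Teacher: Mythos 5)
The paper itself offers no proof of Lemma~C: it is quoted verbatim from the Hu--Li--Yang monograph, so there is no internal argument to compare yours against. Judged on its own, your architecture --- viewing $f_1+f_2+f_3=1$ as a map $F$ into a $\mathbb{P}^2\subset\mathbb{P}^3$, splitting into linearly degenerate and nondegenerate cases, and invoking a truncated Second Main Theorem over $\mathbb{C}^m$ --- is the standard route to Borel-type lemmas of this kind, and your degenerate case is workable modulo the sub-case bookkeeping you yourself flag (the constant sub-cases are precisely where $f_2\not\equiv 1$, $f_3\not\equiv 1$ enter).

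The genuine gap is in the divisor bookkeeping of the nondegenerate step. With a reduced representation $(g_1:g_2:g_3:g_4)$, $g_j=f_jh$, $g_4=h$, one has
\[
\operatorname{div}(g_j)=\operatorname{div}_0(f_j)+\bigl(\operatorname{div}(h)-\operatorname{div}_\infty(f_j)\bigr),
\]
and the second summand is strictly positive wherever some other $f_k$ has a deeper pole than $f_j$. So the zero contributions in Cartan's inequality are \emph{not} ``precisely the $N_2\left(r,1/f_j\right)$''; each coordinate line also picks up contributions supported on the polar set. Bounding these crudely (each truncated term by $2\overline{N}$ of the polar set) turns your right-hand side into roughly $\sum_{j}N_2\left(r,1/f_j\right)+6\overline{N}(r,\mathrm{poles})\le 3\sum_{j}\bigl\{N_2\left(r,1/f_j\right)+2\overline{N}(r,f_j)\bigr\}$, and then the hypothesis only yields $T(r,F)\le 3\lambda\,T(r,F)+o(T(r,F))$, which is no contradiction once $\lambda\ge 1/3$ --- but the lemma must hold for every positive $\lambda<1$. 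The missing idea that repairs this is structural: since $f_1+f_2+f_3=1$, along every irreducible component of the polar set the maximal pole order is attained by at least \emph{two} of the $f_j$ (otherwise the sum would have a pole there). Writing $\nu=\operatorname{div}(h)$ and $\nu_j=\operatorname{div}_\infty(f_j)$, this forces the pointwise inequality
\[
\sum_{j=1}^{3}\min\{2,\nu-\nu_j\}+\min\{2,\nu\}\le 2\sum_{j=1}^{3}\min\{1,\nu_j\},
\]
which is exactly what is needed to get $\sum_{j=1}^{4}N^{[2]}(r,F^{*}H_j)\le\sum_{j=1}^{3}\bigl\{N_2\left(r,1/f_j\right)+2\overline{N}(r,f_j)\bigr\}$ and finish the argument as you intend. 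Without this observation (or equivalent careful accounting at common poles), the step as you wrote it fails for $\lambda$ close to $1$.
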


\section{\bf Main results and its proof}
Before we state the main results of this article, let us set a periodic polynomial $\Psi(z)$ in $\mathbb{C}^n \;(n\geq 2)$ (see \cite[p. 3]{Hal-Ban-DM-2023}) is the following
\begin{align}\label{Eq-3.1}
	\Psi(z)=\sum_{i_1=1}^{^nC_2} H^2_{i_1}(s^2_{i_1}) +\sum_{i_2=1}^{^nC_3} H^3_{i_2}(s^3_{i_2}) +\cdots+ \sum_{i_{n-2}=1}^{^nC_{n-1}} H^{n-1}_{i_{n-2}}(s^{n-1}_{i_{n-2}}) + H^{n}_{n-1} (s^n_{n-1}),
\end{align}
where $H^2_{i_1}$ is a polynomial in $s^2_{i_1}:=d^2_{i_1 j_1}z_{j_1} +d^2_{i_1 j_2}z_{j_2}$ with $d^2_{i_1 j_1}c_{j_1} +d^2_{i_1 j_2}c_{j_2}=0$, $1\leq i_1\leq {^nC_2}$ and $1\leq j_1<j_2\leq n$; $H^3_{i_2}$ is a polynomial in $s^3_{i_2}:=d^3_{i_2 j_1}z_{j_1} +d^3_{i_2 j_2}z_{j_2} +d^3_{i_2 j_3}z_{j_3}$ with $d^3_{i_2 j_1}c_{j_1} +d^3_{i_2 j_2}c_{j_2} +d^3_{i_2 j_3}c_{j_3}=0$, $1\leq i_2\leq {^nC_3}$ and $1\leq j_1<j_2<j_3\leq n$; $\ldots$; $H^{n-1}_{i_{n-2}}$ is a polynomial in $s^{n-1}_{i_{n-2}}:= d^{n-1}_{i_{n-2} j_1}z_{j_1} +d^{n-1}_{i_{n-2} j_2}z_{j_2} +\cdots+d^{n-1}_{i_{n-2} j_{n-1}} z_{j_{n-1}}$ with $d^{n-1}_{i_{n-2} j_1}c_{j_1} +d^{n-1}_{i_{n-2} j_2}c_{j_2} +\cdots+d^{n-1}_{i_{n-2} j_{n-1}} c_{j_{n-1}}=0$, $1\leq i_{n-2}\leq {^nC_{n-1}}$ and $1\leq j_1<j_2<\cdots<j_{n-1}\leq n$; and $H^{n}_{n-1}$ is a polynomial in $s^n_{n-1}:= d^{n}_{i_{n-1} 1}z_{1} +d^{n}_{i_{n-1} 2}z_{2} +\cdots+d^{n}_{i_{n-1} n} z_{n}$ with $d^{n}_{i_{n-1} 1}c_{1} +d^{n}_{i_{n-1} 2}c_{2} +\cdots+d^{n}_{i_{n-1} n} c_{n}=0$, where for each $k$, the representation of $s^k_i$ in terms of the conditions of $j_1, j_2,\ldots,j_k$ is unique. \vspace{2mm}

For the convenience of the reader, we will present our three main results involving difference, partial differential, and partial differential-difference equations, along with their proofs and comprehensive details in each sub-section. This structured approach allows for a systematic presentation of our findings, ensuring that the key properties and characteristics of these solutions are clearly highlighted.

\subsection{\bf System of GQE type difference equations:}
The first result of the article is the following, which answers Question \ref{q-1.1} completely in case difference equations in $\mathbb{C}^n$.
\begin{thm}\label{th-2.1}
Let $ c=(c_1,\ldots,c_n)\in\mathbb{C}^n \setminus\{(0,\ldots,0)\}$ and $a, b, C, \alpha, \beta, \gamma\in\mathbb{C}$ such that $ab\neq 0$, $\Delta\neq 0$, and $\alpha^2\neq 0, ab$. Then the pair of finite order transcendental entire solutions in $\mathbb{C}^n$ for the system of general quadratic difference equations 
\begin{align}\label{eq-3.1}
	\begin{cases}
		\mathcal{A}\left(f_1(z),\; f_2(z+c)\right)=0,\\
		\mathcal{A}\left(f_2(z),\; f_1(z+c)\right)=0\;
	\end{cases}
\end{align}
must be one of the forms
\begin{enumerate}
	\item [(i)] $ f_1(z)=D_{11}\cos[\gamma(z)+b_1] -D_{12}\sin[\gamma(z)+b_1]+T_1, \vspace{2mm} \\ f_2(z)=D_{11}\cos[\gamma(z)+b_2] -D_{12}\sin[\gamma(z)+b_2]+T_1 $,
\end{enumerate}
where $\gamma(z)=L(z)+\Psi(z)$, $L(z)$ is a linear function of the form $ L(z)=\sum_{r=1}^{n} a_r z_r $ and $\Psi(z)$ is a polynomial defined in \eqref{Eq-3.1}, and $b_1,b_2\in\mathbb{C} $ satisfy
\begin{align*}
	T_1=T_2,\;\; e^{2iL(c)}\equiv\frac{R^2_{13}}{R^2_{11}} \;\;\mbox{and}\;\; e^{2i(b_1-b_2)}\equiv\frac{R_{13}}{R_{11}R_{12}},
\end{align*}
where
\begin{align*}
	R_{11}=\frac{iD_{11}-D_{12}}{iE_{11}-E_{12}},\;\; R_{12}=\frac{iD_{11}+D_{12}}{iE_{11}-E_{12}},\;R_{13}=\frac{iE_{11}+E_{12}}{iE_{11}-E_{12}}.
\end{align*}\vspace{2mm}

\begin{enumerate}
	\item [(ii)] $ f_1(z)=D_{11}\cos[\gamma(z)+b_1] -D_{12}\sin[\gamma(z)+b_1]+T_1, \vspace{2mm}\\ f_2(z)=D_{11}\cos[\gamma(z)+b_2] +D_{12}\sin[\gamma(z)+b_2]+T_1 $,
\end{enumerate}
where $ D_{11},D_{12},\gamma(z) $ are stated as in $ (i) $ and  $b_1, b_2\in\mathbb{C} $ satisfy
\begin{align*}
	T_1=T_2,\;\;e^{2iL(c)}\equiv\frac{R_{13}}{R_{11}R_{12}}\;\;\mbox{and}
	\;\; e^{2i(b_1-b_2)}\equiv\frac{1}{R^2_{11}}.
\end{align*}
\end{thm}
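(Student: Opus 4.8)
The plan is to reduce each general quadratic equation $\mathcal{A}(f,g)=0$ to a circular equation and then invoke Theorem A. First I would translate to the centre of the conic: setting $f=F+T_1$, $g=G+T_2$, where $(T_1,T_2)$ is the unique solution of $aT_1+\alpha T_2+\beta=0$, $\alpha T_1+bT_2+\gamma=0$ (solvable since $D=ab-\alpha^2\neq0$, and one checks $T_1=\frac{\alpha\gamma-b\beta}{ab-\alpha^2}$, $T_2=\frac{\alpha\beta-a\gamma}{ab-\alpha^2}$), the linear terms cancel and $\mathcal{A}(f,g)=0$ becomes $aF^2+2\alpha FG+bG^2=-\Delta/D$. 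Diagonalising the binary form with the orthonormal eigenvectors $(\xi_1^{\pm},\eta_1^{\pm})$ of $M=\begin{pmatrix} a&\alpha\\ \alpha&b\end{pmatrix}$ (eigenvalues $A^{\pm}$, $B^{\mp}$) and rescaling by $\sqrt{DA^{\pm}/(-\Delta)}$, $\sqrt{DB^{\mp}/(-\Delta)}$ converts the equation into $P^2+Q^2=1$ with $f=D_{11}P-D_{12}Q+T_1$ and $g=E_{11}P+E_{12}Q+T_2$. By Theorem A the entire solutions are $P=\cos h$, $Q=\sin h$ for an entire $h$; and since the $f_i$ are transcendental of finite order, Lemma B (Pólya) forces each auxiliary function $h$ to be a \emph{polynomial}.

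Applying this to the two members of \eqref{eq-3.1} I obtain polynomials $h_1,h_2$ (in $z$) with
\begin{align*}
	f_1(z)&=D_{11}\cos h_1-D_{12}\sin h_1+T_1, & f_2(z+c)&=E_{11}\cos h_1+E_{12}\sin h_1+T_2,\\
	f_2(z)&=D_{11}\cos h_2-D_{12}\sin h_2+T_1, & f_1(z+c)&=E_{11}\cos h_2+E_{12}\sin h_2+T_2.
\end{align*}
Comparing the two available descriptions of $f_1(z+c)$ (shift the first line) and of $f_2(z+c)$ (shift the third line) gives two trigonometric identities, which I rewrite in exponential form via $\cos\theta=\tfrac12(e^{i\theta}+e^{-i\theta})$, $\sin\theta=\tfrac1{2i}(e^{i\theta}-e^{-i\theta})$. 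Each becomes a linear relation among $e^{\pm ih_1}$, $e^{\pm ih_2}$ and the constant $e^{0}$, with coefficients $\tfrac12(D_{11}\pm iD_{12})$ and $\tfrac12(E_{11}\mp iE_{12})$.

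The core step is a Borel/Nevanlinna argument in the spirit of Lemma C. Because $h_1,h_2$ are non-constant polynomials, the exponents $\pm ih_1(z+c)$, $\pm ih_2(z)$ and $0$ are pairwise inequivalent modulo constants unless coincidences are forced; the isolated constant exponent $0$ cannot pair with any non-constant one, so its coefficient must vanish, which yields $T_1=T_2$. The remaining four exponentials then split into two classes differing by constants, giving $h_1(z+c)=\varepsilon\,h_2(z)+k_1$ and $h_2(z+c)=\varepsilon\,h_1(z)+k_2$ for a common sign $\varepsilon=\pm1$; the two choices produce the two forms, $\varepsilon=+1$ for (i) and $\varepsilon=-1$ for (ii). Adding and subtracting these shift relations shows that $h_1-\varepsilon h_2$ is constant while the complementary combination $\gamma$ satisfies $\gamma(z+c)-\gamma(z)=L(c)$, a constant; hence $h_1=\gamma+b_1$ and $h_2=\varepsilon(\gamma+b_2)$.

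It then remains to identify the polynomials $\gamma$ with $\gamma(z+c)-\gamma(z)$ constant, and I expect this to be the main obstacle. As the Observations illustrate, in $\mathbb{C}^n$ with $n\geq3$ this class is strictly larger than $L(z)+H(s)$, and the correct description is $\gamma(z)=L(z)+\Psi(z)$ with $L$ linear and $\Psi$ the periodic polynomial \eqref{Eq-3.1}; proving this structure theorem is exactly where the several-variable difficulty enters, since one must track all the independent ``periodic directions'' $s^k_{i}$ rather than a single $s$. Finally, matching the coefficients of $e^{i\gamma}$ and of $e^{-i\gamma}$ in each of the two exponential identities produces four relations of the form $e^{i(L(c)\pm(b_1-b_2))}=(\text{ratio of }D_{1j},E_{1j})$; expressing those ratios through $R_{11}=\frac{iD_{11}-D_{12}}{iE_{11}-E_{12}}$, $R_{12}=\frac{iD_{11}+D_{12}}{iE_{11}-E_{12}}$, $R_{13}=\frac{iE_{11}+E_{12}}{iE_{11}-E_{12}}$ and taking suitable products and quotients yields the stated conditions $T_1=T_2$, $e^{2iL(c)}\equiv R_{13}^2/R_{11}^2$ and $e^{2i(b_1-b_2)}\equiv R_{13}/(R_{11}R_{12})$ in case (i), together with their counterparts in case (ii).
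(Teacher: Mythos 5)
Your proposal follows the paper's proof essentially step for step: the same translation by $(T_1,T_2)$ and diagonalization reducing each $\mathcal{A}(f,g)=0$ to a circular equation, Theorem A together with Lemmas A and B to force the auxiliary functions $h_1,h_2$ to be non-constant polynomials, comparison of the two exponential descriptions of $f_1(z+c)$ and of $f_2(z+c)$, and a Borel--Nevanlinna argument on the resulting exponential identities followed by matching of constants. The only packaging difference is that you apply Borel's theorem directly to the five-term identity (four non-constant exponentials plus $e^{0}$), extracting $T_1=T_2$ at once from the isolated constant exponent, whereas the paper normalizes each identity into a three-term sum equal to $1$, applies Lemma C, and then deduces $R_{14}=0$ (hence $T_1=T_2$) separately inside each case; your variant is equivalent and, if anything, cleaner. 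Your deferral of the structure theorem for polynomials with constant shift-difference (that $\gamma(z+c)-\gamma(z)$ constant forces $\gamma=L+\Psi$ with $\Psi$ as in \eqref{Eq-3.1}) is on par with the paper, which also quotes this from the cited literature rather than proving it.

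There is, however, one concrete gap: you assert without justification that the two pairing relations carry a \emph{common} sign, i.e.\ $h_1(z+c)=\varepsilon h_2(z)+k_1$ and $h_2(z+c)=\varepsilon h_1(z)+k_2$ with the same $\varepsilon\in\{+1,-1\}$. Borel's theorem applied to each identity separately allows four sign combinations, and the two mixed ones must be excluded; this is exactly the content of the paper's Cases II and III. The exclusion is short but indispensable: if, say, $h_1(z+c)-h_2(z)\equiv k_1$ while $h_2(z+c)+h_1(z)\equiv k_2$, then shifting the first relation by $c$ and substituting gives $h_1(z+2c)+h_1(z)\equiv k_1+k_2$, which is impossible for a non-constant polynomial $h_1$ because the leading homogeneous parts of $h_1(z+2c)$ and $h_1(z)$ add rather than cancel (the same argument you implicitly use when deducing that $h_1-\varepsilon h_2$ is constant). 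Without ruling out the mixed cases, the dichotomy between forms (i) and (ii) in the statement does not follow, so this step needs to be written out.
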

\begin{proof}[\bf Proof of Theorem \ref{th-2.1}]
Assume that $ (f_1, f_2) $ is a pair of finite order transcendental entire solution of the system $ \eqref{eq-3.1} $. Considering $f_1=F+T_1$ and $f_2=G+T_2$, the first equation of the system \eqref{eq-3.1} can be expressed to the following trinomial form
\begin{align}\label{eq-44.2}
	aF^2 + 2\alpha FG + bG^2 + \dfrac{\Delta}{D} = 0.
\end{align} 
Further, we consider that $ F=\mathcal{U}\xi^{\pm}_1 - \mathcal{V}\eta^{\pm}_1 $ and $ G=\mathcal{U}\eta^{\pm}_1+ \mathcal{V}\xi^{\pm}_1 $. By a tedious computation, it becomes evident that \eqref{eq-44.2} is simplified to the Circular-type equation
\begin{align*}
	\left(\sqrt{\frac{DA^{\pm}}{-\Delta}}\;\mathcal{U}\right)^2 + \left(\sqrt{\frac{DB^{\mp}}{-\Delta}}\;\mathcal{V}\right)^2 =1.
\end{align*}
Since $ f_1 $ and $ f_2 $ are finite order entire solutions, it is easy to see that the functions $ F $, $ G $, $ \mathcal{U} $ and $ \mathcal{V} $ are also entire with finite order in $ \mathbb{C}^n $. Thus, in view of Theorem A, Lemmas  A and B, it follows that there exists a non-constant polynomial $ h_1:\mathbb{C}^n \rightarrow \mathbb{C} $ such that $ \sqrt{\frac{DA^{\pm}}{-\Delta}}\;\mathcal{U} =\cos h_1(z) $ and $ \sqrt{\frac{DB^{\mp}}{-\Delta}}\;\mathcal{V} =\sin h_1(z) $. This turns out that $ f_1(z) $ and $ f_2(z+c) $ can be obtained as
\begin{align}\label{eq-3.2}
	f_1(z)=D_{11}\cos h_1(z) -D_{12}\sin h_1(z)+T_1 ,\\\label{eq-3.3}f_2(z+c)=E_{11}\cos h_1(z) +E_{12}\sin h_1(z)+T_2.
\end{align}
By the similar argument, in the case of the second equation of the system $ \eqref{eq-3.1} $, there exists a non-constant polynomial $ h_2(z) $ in $ \mathbb{C}^n $ such that $ f_2(z) $ and $ f_1(z+c) $ can be expressed as
\begin{align}\label{eq-3.4}
	f_2(z)=D_{11}\cos h_2(z) -D_{12}\sin h_2(z)+T_1 ,\\\label{eq-3.5}f_1(z+c)=E_{11}\cos h_2(z) +E_{12}\sin h_2(z)+T_2.
\end{align}
In view of $\eqref{eq-3.2}\;\mbox{and}\;\eqref{eq-3.5}$, a simple computation  shows that
\begin{align*}
	D_{11}\cos h_1(z+c)-D_{12}\sin h_1(z+c)+T_1=E_{11}\cos h_2(z)+E_{12}\sin h_2(z)+T_2.
\end{align*}
This equation can be written as
\begin{align}\label{eq-3.6}
	R_{11}e^{i(h_1(z+c)+h_2(z))} + R_{12}e^{i(h_2(z)-h_1(z+c))} - [R_{13}e^{ih_2(z)}+R_{14}]e^{ih_2(z)}=1,
\end{align}
where $R_{11}$, $R_{12}$, $R_{13}$, and $R_{14}$ are defined by
\begin{align}\label{eq-3.7}
	\begin{cases}
		R_{11}=\dfrac{iD_{11}-D_{12}}{iE_{11}-E_{12}},\;\; R_{12}=\dfrac{iD_{11}+D_{12}}{iE_{11}-E_{12}},\vspace{2mm}\\
		R_{13}=\dfrac{iE_{11}+E_{12}}{iE_{11}-E_{12}},\;\; R_{14}=\dfrac{2i(T_2-T_1)}{iE_{11}-E_{12}}.
	\end{cases}
\end{align}
Clearly, \eqref{eq-3.6} assume of the form $g_{11}(z) + g_{12}(z) + g_{13}(z) = 1$, where
\begin{align*}
	\begin{cases}
		g_{11}(z)= R_{11}e^{i(h_1(z+c)+h_2(z))},\vspace{2mm}\\ g_{12}(z)= R_{12}e^{i(h_2(z)-h_1(z+c))},\vspace{2mm}\\ g_{13}(z)= - \left(R_{13}e^{ih_2(z)}+R_{14}\right)e^{ih_2(z)}.
	\end{cases}
\end{align*}
It is easy to see that the function $g_{13}(z)= -\left(R_{13}e^{ih_2(z)}+ R_{14}\right)e^{ih_2(z)}$ is non-constant. Otherwise, 
\begin{align*}
	e^{ih_2(z)}={\left(-R_{14}\pm\sqrt{R^2_{14}-4K_1R_{13}}\right)}/{2R_{13}}
\end{align*} 
for arbitrary constants $K_1$, then $h_2$ must be a constant, leading to a contradiction. The functions $ g_{1j} $ $ (j=1, 2, 3) $ have neither zeros nor poles. We also observe that
\begin{align*}
	\overline{N}\left(r, R_{13}e^{2ih_2(z)}+ R_{14}e^{ih_2(z)}\right) =N_2\left(r, \frac{1}{R_{13}e^{2ih_2(z)}+R_{14}e^{ih_2(z)}}\right) =S(r,f),
\end{align*}
\begin{align*}
	\overline{N}\left(r, R_{11}e^{i(h_1(z+c)+h_2(z))}\right)=N_2\left(r, \frac{1}{R_{11}e^{i(h_1(z+c)+h_2(z))}}\right)=S(r,f)
\end{align*}
and
\begin{align*}
	\overline{N}\left(r, R_{12}e^{i(h_2(z)-h_1(z+c))}\right)=N_2\left(r, \frac{1}{R_{12}e^{i(h_2(z)-h_1(z+c))}}\right)=S(r,f).
\end{align*}
Consequently, we have the following estimates
\begin{align*}
	\sum_{j=1}^{3}\left\{N_2\left(r,\frac{1}{g_{1j}}\right)+2\overline{N}(r,g_{1j})\right\} = o(\log^{+} T(r,g_{1j})).
\end{align*} 
Because of $ \lambda T(r,g_{11})> 0 $, the strict inequality 
\begin{align*}
	||\sum_{j=1}^{3}\left\{N_2\left(r,\frac{1}{g_{1j}}\right)+2\overline{N}(r,g_{1j})\right\}<\lambda T(r,g_{11}) + O(\log^{+} T(r,g_{11}))
\end{align*} 
is satisfied for all $r$ outside possibly a set with finite logarithmic measure, where $\lambda<1$ is a positive number. Hence, by Lemma C, we have
\begin{align*}
	R_{12}e^{i(h_2(z)-h_1(z+c))}\equiv 1, \;\;\;\;\mbox{or}\;\;\;\;	R_{11}e^{i(h_1(z+c)+h_2(z))}\equiv 1.
\end{align*}
Similarly, using $\eqref{eq-3.3}\;\mbox{and}\;\eqref{eq-3.4} $, we see that
\begin{align}\label{eq-3.8}
	D_{11}\cos h_2(z+c)-D_{12}\sin h_2(z+c)+T_1=E_{11}\cos h_1(z)+E_{12}\sin h_1(z)+T_2,
\end{align}
which can be expressed as
\begin{align}\label{eq-3.9}
	R_{11}e^{i(h_2(z+c)+h_1(z))} + R_{12}e^{i(h_1(z)-h_2(z+c))} - \left(R_{13}e^{ih_1(z)}+R_{14}\right)e^{ih_1(z)}=1.
\end{align} 
The equation \eqref{eq-3.9} is the form of $ g_{21}(z) + g_{22}(z) + g_{23}(z) = 1 $, where
\begin{align*}
	\begin{cases}
		g_{21}(z)= R_{11}e^{i(h_2(z+c)+h_1(z))},\vspace{2mm}\\ g_{22}(z)= R_{12}e^{i(h_1(z)-h_2(z+c))},\vspace{2mm}\\ g_{23}(z)= - \left(R_{13}e^{ih_1(z)}+R_{14}\right)e^{ih_1(z)}.
	\end{cases}
\end{align*}
We see that $g_{23}= -\left(R_{13}e^{ih_1(z)}+R_{14}\right)e^{ih_1(z)}$ is non constant.  Otherwise $h_1(z)$ must be a constant, leading to a contradiction. The functions $ g_{2j} $ $ (j=1, 2, 3) $ have no zeros and poles. We also observe that
\begin{align*}
	\overline{N}\left(r, R_{13}e^{2ih_1(z)}+R_{14}e^{ih_1(z)}\right) =N_2\left(r, \frac{1}{R_{13} e^{2ih_1(z)}+R_{14}e^{ih_1(z)}}\right) =S(r,f),
\end{align*}
\begin{align*}
	\overline{N}\left(r, R_{11}e^{i(h_2(z+c)+h_1(z))}\right)=N_2\left(r, \frac{1}{R_{11}e^{i(h_2(z+c)+h_1(z))}}\right)=S(r,f)
\end{align*}
and
\begin{align*}
	\overline{N}\left(r, R_{12}e^{i(h_1(z)-h_2(z+c))}\right)=N_2\left(r, \frac{1}{R_{12}e^{i(h_1(z)-h_2(z+c))}}\right)=S(r,f).
\end{align*}
Consequently, we have the following estimates
\begin{align*}
	\sum_{j=1}^{3}\left\{N_2\left(r,\frac{1}{g_{2j}}\right)+2\overline{N}(r,g_{2j})\right\} = o(\log^{+} T(r,g_{2j})).
\end{align*} 
Because of $ \lambda T(r,g_{21})> 0 $, the strict inequality 
\begin{align*}
	||\sum_{j=1}^{3}\left\{N\left(r,\frac{1}{g_{2j}}\right)+2\overline{N}(r,g_{2j})\right\}<\lambda T(r,g_{21}) + O(\log^{+} T(r,g_{21}))
\end{align*} 
is satisfied for all $r$ outside possibly a set with finite logarithmic measure, where $\lambda<1$ is a positive number. Hence, by Lemma C, we have
\begin{align*}
	R_{12}e^{i(h_1(z)-h_2(z+c))}\equiv 1, \;\;\;\;\mbox{or}\;\;\;\; 	R_{11}e^{i(h_2(z+c)+h_1(z))}\equiv 1.
\end{align*} 
To complete the proof, it is enough to discuss the following four possible cases.\vspace{1.2mm}
	
\noindent{\bf Case I.} First, we suppose that
\begin{align}\label{eq-3.10}
	\begin{cases}
		R_{12}e^{i(h_2(z)-h_1(z+c))}\equiv1,\vspace{2mm}\\	R_{12}e^{i(h_1(z)-h_2(z+c))}\equiv1.
	\end{cases}
\end{align}
It follows from \eqref{eq-3.10} that $ h_2(z)-h_1(z+c)=\xi_1 \;\mbox{and}\; h_1(z)-h_2(z+c)=\xi_2 $, where $ \xi_1\;\mbox{and}\; \xi_2 $  are two constants in $ \mathbb{C} $. By combining, we have $ h_k(z)-h_k(z+2c)=\xi_1+\xi_2 $; $ k=1,2 $. Since $ h_1\;\mbox{and}\; h_2 $ are non-constant polynomials in $ \mathbb{C}^n $, we conclude that $ h_1(z)=\gamma(z)+b_1\;\mbox{and}\; h_2(z)=\gamma(z)+b_2  $, where $\gamma(z)=L(z)+\Psi(z) $, $L(z)$ is a linear function of the form $ L(z)=a_1z_1+\cdots+a_nz_n $ and $\Psi(z)$ is a polynomial defined in \eqref{Eq-3.1}, and $b_1,b_2\in\mathbb{C}$. In other words, $ \deg_s \Psi(z)\geq 2 $. Otherwise, $ L(z)+\Psi(z)+b_i $ is still a linear form of $ z_1,\ldots,z_n$. Therefore, the equation \eqref{eq-3.10} becomes
\begin{align}\label{eq-3.11}
	\begin{cases}
		R_{12} e^{i(-L(c)-b_1+b_2)} \equiv 1,\vspace{2mm}\\	R_{12} e^{i(-L(c)+b_1-b_2)} \equiv 1.
	\end{cases}
\end{align}
In view of \eqref{eq-3.6} and the first equation \eqref{eq-3.10}, a simple computation shows that
\begin{align*}
	R_{11}e^{ih_1(z+c)}-R_{13}e^{ih_2(z)}= R_{14},
\end{align*}
or,
\begin{align*}
	R_{11}e^{i(L(c)+b_1-b_2)}-R_{13}= R_{14}e^{-ih_2(z)}.
\end{align*}
Since the left-hand side is constant, the right-hand side must be constant as well. Hence either $R_{14}=0$, which yields $T_1=T_2$, or $h_2$ is constant, which contradicts our assumptions. Therefore $R_{14}=0$ and consequently $T_1=T_2$. Hence, we get
\begin{align}\label{Eq-3.14}
	R_{11}e^{i(L(c)+b_1-b_2)}= R_{13}.
\end{align}
Again using \eqref{eq-3.9} and the second equation \eqref{eq-3.10} we have
\begin{align*}
	R_{11}e^{ih_{2}(z+c)}-R_{13}e^{ih_{1}(z)}=R_{14}
\end{align*}
or,
\begin{align*}
	R_{11}e^{i(L(c)-b_1+b_2)}-R_{13}= R_{14}e^{-ih_1(z)}.
\end{align*}
which implies that $R_{14}$ must be $0$. Otherwise, $h_1$ must be a constant, which is a contradiction. Therefore $R_{14}=0$, implies that $T_1=T_2$. Thus, we get
\begin{align}\label{Eq-3.15}
	R_{11}e^{i(L(c)-b_1+b_2)}= R_{13}.
\end{align}
Clearly, it follows from \eqref{eq-3.11}, \eqref{Eq-3.14} and \eqref{Eq-3.15} that 
\begin{align}
	e^{2iL(c)}\equiv\frac{R^2_{13}}{R^2_{11}}\;\;\;\;\mbox{and}\;\;\;\; e^{2i(b_1-b_2)}\equiv\frac{R_{13}}{R_{11}R_{12}}.
\end{align} 
Hence, from $ \eqref{eq-3.2}$ and $\eqref{eq-3.4}$, we obtain explicit form of the solutions
\begin{align*}
	f_1(z)&=D_{11}\cos h_1(z) -D_{12}\sin h_1(z)+T_1,\\& =D_{11}\cos(\gamma(z)+b_1) -D_{12}\sin(\gamma(z)+b_1)+T_1
\end{align*}
and
\begin{align*}
	f_2(z)&=D_{11}\cos h_2(z) -D_{12}\sin h_2(z)+T_1,\\& =D_{11}\cos(\gamma(z)+b_2) -D_{12}\sin(\gamma(z)+b_2)+T_1.
\end{align*}
	
\noindent{\bf Case II.} Suppose that
\begin{align*}
	\begin{cases}
		R_{12}e^{i(h_2(z)-h_1(z+c))}\equiv1,\vspace{2mm}\\R_{11}e^{i(h_2(z+c)+h_1(z))}\equiv1.
	\end{cases}
\end{align*}
The above equations clarify that $ h_2(z)-h_1(z+c)\equiv\xi_1\;\mbox{and}\l\; h_2(z+c)+h_1(z)\equiv\xi_2$, with $ \xi_1\;\mbox{and}\; \xi_2 $ being two constants in $ \mathbb{C} $. Therefore, we obtain $ h_2(z)+h_2(z+2c)\equiv\xi_1+\xi_2 $, which contradicts the assumption that $ h_2(z) $ is a non-constant polynomial in $ \mathbb{C}^n $. \vspace{1.2mm}
	
\noindent{\bf Case III.} Suppose that
\begin{align*}
	\begin{cases}
		R_{11}e^{i(h_1(z+c)+h_2(z))}\equiv 1,\vspace{2mm}\\	R_{12}e^{i(h_1(z)-h_2(z+c))}\equiv 1.
	\end{cases}
\end{align*}
The above two equations lead to $ h_1(z+c)+h_2(z)\equiv\xi_1\;\mbox{and}\; h_1(z)-h_2(z+c)\equiv\xi_2 $, where $ \xi_1\;\mbox{and}\; \xi_2 $ denote two constants in $ \mathbb{C} $. Thus, we see that $ h_1(z)+h_1(z+2c)\equiv\xi_1+\xi_2 $, which contradicts that $ h_1(z) $ is a non-constant polynomial in $\mathbb{C}^n$.\vspace{1.2mm}
	
\noindent{\bf Case IV.} Let
\begin{align}\label{eq-3.13}
	\begin{cases}
		R_{11}e^{i(h_1(z+c)+h_2(z))}\equiv 1,\vspace{2mm}\\R_{11}e^{i(h_2(z+c)+h_1(z))}\equiv 1.
	\end{cases}
\end{align}
From $ \eqref{eq-3.13} $ it follows that $ h_1(z+c)+h_2(z)\equiv\xi_1 \;\mbox{and}\; h_2(z+c)+h_1(z)\equiv\xi_2 $, where $ \xi_1\;\mbox{and}\; \xi_2 $ are two constants in $ \mathbb{C} $. Thus $ h_1(z+2c)-h_1(z) \equiv\xi_1-\xi_2 \;\mbox{and}\; h_2(z+2c)-h_2(z)\equiv\xi_2-\xi_1 $. Since $ h_1\;\mbox{and}\; h_2 $ are non-constant polynomial in $ \mathbb{C}^2 $ and similar to the argument as in Case I, we can conclude that $ h_1(z)=\gamma(z)+b^{*}_1\; \mbox{and}\;h_2(z)=-\gamma(z)+b^{*}_2  $, where $\gamma(z)=L(z)+\Psi(z)$, $L(z)$ is a linear function of the form $ L(z)=a_1z_1+\cdots+a_nz_n $ and $\Psi(z)$ is a polynomial defined in \eqref{Eq-3.1}, and $b^{*}_1,b^{*}_2\in\mathbb{C} $. So, the equation \eqref{eq-3.13} becomes
\begin{align}\label{eq-3.14}
	\begin{cases}
		R_{11}e^{i(L(c)+b^{*}_1+b^{*}_2)}\equiv1,\vspace{2mm}\\	R_{11}e^{i(-L(c)+b^{*}_1+b^{*}_2)}\equiv1.
	\end{cases}
\end{align}
Considering \eqref{eq-3.6} and the first equation \eqref{eq-3.13}, it is easy to see that
\begin{align*}
	R_{12}e^{-ih_1(z+c)}-R_{13}e^{ih_2(z)}=R_{14},
\end{align*}
or,
\begin{align*}
	R_{12}e^{i(-L(c)-b^{*}_1-b^{*}_2)}-R_{13}= R_{14}e^{-ih_2(z)},
\end{align*}
which implies that $R_{14}$ must be $0$. Otherwise, $h_2$ must be a constant, which is a contradiction. Consequently $T_1=T_2$.  Hence, we get
\begin{align}\label{Eq-3.19}
	R_{12}e^{i(-L(c)-b^{*}_1-b^{*}_2)}=R_{13}.
\end{align}
Again using \eqref{eq-3.9} and the second equation \eqref{eq-3.13} we have
\begin{align*}
	R_{12}e^{-ih_{2}(z+c)}-R_{13}e^{ih_{1}(z)}=R_{14}
\end{align*}
or,
\begin{align*}
	R_{12}e^{i(L(c)-b^{*}_1-b^{*}_2)}-R_{13}= R_{14}e^{-ih_1(z)},
\end{align*}
which implies that $R_{14}$ must be $0$. Otherwise, $h_1$ must be a constant, which leads to a contradiction. Therefore $R_{14}=0$, implies that $T_1=T_2$.  Thus, we get
\begin{align}\label{Eq-3.20}
	R_{12}e^{i(L(c)-b^{*}_1-b^{*}_2)}= R_{13}.
\end{align}
Hence, it follows from \eqref{eq-3.14}, \eqref{Eq-3.19} and \eqref{Eq-3.20} that 
\begin{align}\label{eq-3.15}
	e^{2iL(c)}\equiv\frac{R_{13}}{R_{11}R_{12}}\;\;\;\;\mbox{and}\;\;\;\; e^{2i(b^{*}_1+b^{*}_2)}\equiv\frac{1}{R^2_{11}}.
\end{align}
In view of  \eqref{eq-3.2} and \eqref{eq-3.4}, we obtain the following
\begin{align*}
	f_1(z)&=D_{11}\cos h_1(z) -D_{12}\sin h_1(z)+T_1,\\& =D_{11}\cos(\gamma(z)+b^{*}_1) -D_{12}\sin(\gamma(z)+b^{*}_1)+T_1
\end{align*}
and
\begin{align*}
	f_2(z)&=D_{11}\cos h_2(z) -D_{12}\sin h_2(z)+T_1,\\& =D_{11}\cos(-\gamma(z)+b^{*}_2) -D_{12}\sin(-\gamma(z)+b^{*}_2)+T_1,\\& =D_{11}\cos(\gamma(z)-b^{*}_2) + D_{12}\sin(\gamma(z)-b^{*}_2)+T_1.
\end{align*}
Letting $ b_1=b^{*}_1\;\mbox{and}\;b_2=-b^{*}_2 $, the solution $(f_1, f_2)$ takes the form
\begin{align*}
	f_1(z)=D_{11}\cos(\gamma(z)+b_1) -D_{12}\sin(\gamma(z)+b_1)+T_1,\\ f_2(z)=D_{11}\cos(\gamma(z)+b_2) + D_{12}\sin(\gamma(z)+b_2)+T_1.
\end{align*} 
That completes the proof.
\end{proof}

\begin{rem}
The following examples shows that equation \eqref{eq-3.1} admits transcendental entire solution with growth order $\geq 1$. This represents a significant difference between the solutions in $\mathbb{C}$ and those in $\mathbb{C}^n$. It is clear that Theorem \ref{th-2.1} improves the results from \cite[Theorem 1.2]{Xu-Liu-Li-JMAA-2020}, \cite[Theorem 2.1]{XU-JIANG-RACSAM-2022}, and \cite[Theorem 1.4]{Liu & Yang & 2016} for system of general quadratic differential-difference functional equations in $\mathbb{C}^n$.
\end{rem}

The examples given below clearly demonstrate the existence of transcendental entire solutions to the equation system in Theorem \ref{th-2.1}.
\begin{exm}
For $c=(3, 1, -2)\in\mathbb{C}^3$ and $ b_1=\left(1+\ln 5\right)/(2i)$, and $ b_2=1/(2i) $, by a routine computation, it can be easily
shown that the pair $(f_1,f_2)$, where
\begin{align*}
	f_1(z)&=\frac{-2}{\sqrt{10\pm 6\sqrt{5}}} \cos\left(2z_1+\frac{\ln 3}{2i} z_2 +3z_3 +(2z_1 -4z_2+z_3)^5  +\frac{\left(1+\ln 5\right)}{2i}\right) \\&\quad- \frac{(-1 \pm\sqrt{5})}{\sqrt{30\mp 14\sqrt{5}}} \sin \left(2z_1+\frac{\ln 3}{2i} z_2 +3z_3 +(2z_1 -4z_2+z_3)^2  +\frac{\left(1+\ln 5\right)}{2i}\right) - 2,
\end{align*}
and
\begin{align*}
	f_2(z)&=\frac{-2}{\sqrt{10\pm 6\sqrt{5}}} \cos\left(2z_1+\frac{\ln 3}{2i} z_2 +3z_3 +(2z_1 -4z_2+z_3)^5  +\frac{1}{2i}\right) \\&\quad+ \frac{(-1 \pm\sqrt{5})}{\sqrt{30\mp 14\sqrt{5}}} \sin \left(2z_1+\frac{\ln 3}{2i} z_2 +3z_3 +(2z_1 -4z_2+z_3)^4  +\frac{1}{2i}\right) - 2,
\end{align*}
is transcendental entire solutions with $\rho(f_1,f_2)=5>1$ in $\mathbb{C}^3$ of the equations
\begin{align*}
	\begin{cases}
		3f_1(z)^2 -4 f_1(z)f_2(z+c)+f_2(z+c)^2+ 4 f_1(z) -4 f_2(z+c)=1,\\ 3f_2(z)^2 -4 f_2(z)f_1(z+c)+f_1(z+c)^2+ 4 f_2(z) -4 f_1(z+c)=1.
	\end{cases}
\end{align*}
\end{exm}

\begin{rem}
Theorem \ref{th-2.1} gives rise to the following result as an immediate corollary that extends \cite[Theorem 1.2] {Xu-Liu-Li-JMAA-2020} and \cite[Theorem 2.1]{XU-JIANG-RACSAM-2022} to include arbitrary constants as coefficients of the equations, and also generalize from $\mathbb{C}^2$ to $\mathbb{C}^n$ for $n\geq 2$.
\end{rem}
\begin{cor}\label{cor-2.1}
Let $  c=(c_1,\ldots,c_n)\in\mathbb{C}^n\setminus\{(0, \ldots,0)\} $ and $ a,b,\alpha\in\mathbb{C} $ with $ ab\neq 0 $ and $\alpha^2\neq 0, ab $. Then the pair of finite order transcendental entire solutions in $\mathbb{C}^n$ for the system of general quadratic difference equations 
\begin{align*}
		\begin{cases}
			af_1(z)^2+2\alpha f_1(z)f_2(z+c)+bf_2(z+c)^2=1,\\ af_2(z)^2+2\alpha f_2(z)f_1(z+c)+bf_1(z+c)^2=1,
		\end{cases}
\end{align*}
must be one of the forms
\begin{enumerate}
		\item [(i)] $ f_1(z)=A_{11}\cos[\gamma(z)+b_1] -A_{12}\sin[\gamma(z)+b_1],\vspace{2mm}\\
		f_2(z)=A_{11}\cos[\gamma(z)+b_2] -A_{12}\sin[\gamma(z)+b_2] $,
\end{enumerate}
where $\gamma(z)=L(z)+\Psi(z)$, $L(z)$ is a linear function of the form $ L(z)=\sum_{r=1}^{n} a_r z_r $ and $\Psi(z)$ is a polynomial defined in \eqref{Eq-3.1}, and $b_1,b_2\in\mathbb{C} $ satisfy
\begin{align*}
		e^{2iL(c)}\equiv\frac{R^2_{13}}{R^2_{11}}\;\;\;\;\mbox{and}\;\;\;\; e^{2i(b_1-b_2)}\equiv\frac{R_{13}}{R_{11}R_{12}},
\end{align*}
where
\begin{align*}
		R_{11}=\frac{iA_{11}-A_{12}}{iB_{11}-B_{12}},\;\; R_{12}=\frac{iA_{11}+A_{12}}{iB_{11}-B_{12}}\;\;\mbox{and}\;\; R_{13}=\frac{iB_{11}+B_{12}}{iB_{11}-B_{12}}.
\end{align*}
\begin{enumerate}
		\item [(ii)] $ f_1(z)=A_{11}\cos[\gamma(z)+b_1] -A_{12}\sin[\gamma(z)+b_1],\vspace{2mm}\\f_2(z)=A_{11}\cos[\gamma(z)+b_2] +A_{12}\sin[\gamma(z)+b_2] $,
\end{enumerate}
	where $ A_{11}, A_{12},\gamma(z) $ are stated as in $ (i) $ and  $ b_1, b_2\in\mathbb{C} $ satisfy
	\begin{align*}
		e^{2iL(c)}\equiv\frac{R_{13}}{R_{11}R_{12}}\;\;\;\;\mbox{and}\;\;\;\; e^{2i(b_1-b_2)}\equiv\frac{1}{R^2_{11}},
	\end{align*}
	where $ R_{11},R_{12}\;\mbox{and}\;R_{13} $ are stated in $ (i) $ of the corollary \ref{cor-2.1}.
\end{cor}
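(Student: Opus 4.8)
The plan is to deduce Corollary \ref{cor-2.1} directly from Theorem \ref{th-2.1} by specializing the coefficients of the general quadratic form $\mathcal{A}$. Observe that the system in the corollary is precisely the system \eqref{eq-3.1} for the choice $\mathcal{A}(f,g)=af^2+2\alpha fg+bg^2-1$, that is, $\beta=\gamma=0$ and $C=-1$. The first step is to confirm that the standing hypotheses of Theorem \ref{th-2.1} survive this substitution: computing $\Delta=abC+2\alpha\beta\gamma-a\gamma^2-b\beta^2-C\alpha^2=-ab+\alpha^2=-(ab-\alpha^2)=-D$, the nondegeneracy requirement $\Delta\neq 0$ becomes exactly $\alpha^2\neq ab$, which is assumed, while $ab\neq 0$ and $\alpha^2\neq 0$ are also part of the corollary's hypotheses.

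Next I would substitute these parameter values into the auxiliary quantities from Section 2 and simplify. Since $\beta=\gamma=0$, the translation constants collapse to $T_1=T_2=0$; hence the compatibility condition $T_1=T_2$ in Theorem \ref{th-2.1} holds automatically and the additive constants $+T_1$ disappear from the solution forms. Because $-\Delta=D$, the radicals reduce to $\sqrt{DA^{\pm}/(-\Delta)}=\sqrt{A^{\pm}}=K_{11}/\sqrt{2}$ and $\sqrt{DB^{\mp}/(-\Delta)}=\sqrt{B^{\mp}}=K_{13}/\sqrt{2}$. Inserting these, together with $\xi^{\pm}_1=2\alpha/K_{12}$ and $\eta^{\pm}_1=K_{14}/K_{12}$, into the definitions of $D_{11},D_{12},E_{11},E_{12}$ yields precisely $D_{11}=A_{11}$, $D_{12}=A_{12}$, $E_{11}=B_{11}$, $E_{12}=B_{12}$, matching the notation introduced for the corollary.

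With these identifications in hand, the constants from \eqref{eq-3.7} become the three quotients $R_{11}=(iA_{11}-A_{12})/(iB_{11}-B_{12})$, $R_{12}=(iA_{11}+A_{12})/(iB_{11}-B_{12})$, $R_{13}=(iB_{11}+B_{12})/(iB_{11}-B_{12})$, exactly as stated, while $R_{14}=2i(T_2-T_1)/(iE_{11}-E_{12})=0$ vanishes, consistent with the disappearance of the additive constants. Finally I would invoke Theorem \ref{th-2.1} verbatim: its two solution families $(i)$ and $(ii)$, carrying the constraints $e^{2iL(c)}\equiv R_{13}^2/R_{11}^2$, $e^{2i(b_1-b_2)}\equiv R_{13}/(R_{11}R_{12})$ and $e^{2iL(c)}\equiv R_{13}/(R_{11}R_{12})$, $e^{2i(b_1-b_2)}\equiv 1/R_{11}^2$ respectively, reduce under the substitution to the two families claimed in the corollary.

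I expect no genuine obstacle, as the argument is pure specialization of an already proved theorem; the only point demanding care is the bookkeeping of the $\pm/\mp$ branch conventions, so that the square-root simplifications and the resulting constants $A_{11},A_{12},B_{11},B_{12}$ stay internally consistent across both solution families. A brief verification that the paired $\pm$ in $A^{\pm},K_{11}$ and the $\mp$ in $B^{\mp},K_{13}$ are chosen coherently completes the reduction.
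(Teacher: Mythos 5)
Your proposal is correct and is essentially the paper's own route: the paper presents Corollary \ref{cor-2.1} as an immediate specialization of Theorem \ref{th-2.1} with $\beta=\gamma=0$, $C=-1$, and your computations ($\Delta=-D\neq 0$, $T_1=T_2=0$, $R_{14}=0$, and $D_{11}=A_{11}$, $D_{12}=A_{12}$, $E_{11}=B_{11}$, $E_{12}=B_{12}$) are exactly the bookkeeping that justifies it. The verification of the radical simplifications $\sqrt{DA^{\pm}/(-\Delta)}=K_{11}/\sqrt{2}$ and $\sqrt{DB^{\mp}/(-\Delta)}=K_{13}/\sqrt{2}$ checks out, so nothing is missing.
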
\vspace{2mm}

\subsection{\bf System of GQE type partial differential equations:}
The existence of solutions to the system of general quadratic partial differential equations is studied in the following result. In fact, this result improved \cite[Theorem 2.2]{XU-JIANG-RACSAM-2022} for more general sense.
\begin{thm}\label{th-2.2}
Let $ a,b,C,\alpha,\beta,\gamma$ in $\mathbb{C}$ such that $ab\neq 0$ and $\Delta\neq 0$, and $\alpha^2\neq 0, ab$. Then the system of partial differential equations 
 \begin{align}\label{eq-3.16}
 	\begin{cases}
 		\mathcal{A}\left(f_1(z),\; \dfrac{\partial f_2(z)}{\partial z_1}\right)=0,\vspace{2mm}\\
 		\mathcal{A}\left(f_2(z),\; \dfrac{\partial f_1(z)}{\partial z_1}\right)=0
 	\end{cases}
  \end{align}
has no pair of finite order transcendental entire solutions in $\mathbb{C}^n$.
\end{thm}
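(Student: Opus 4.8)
The plan is to run the reduction from the proof of Theorem~\ref{th-2.1} on each equation of \eqref{eq-3.16} and then exploit the rigidity forced by differentiation. Assume, towards a contradiction, that $(f_1,f_2)$ is a pair of finite order transcendental entire solutions. Writing $f_1 = F + T_1$ and $\partial f_2/\partial z_1 = G + T_2$ in the first equation and completing the square exactly as in \eqref{eq-44.2}, followed by the linear substitution $F = \mathcal{U}\xi_1^{\pm} - \mathcal{V}\eta_1^{\pm}$, $G = \mathcal{U}\eta_1^{\pm} + \mathcal{V}\xi_1^{\pm}$, reduces it to the circular equation $(\sqrt{DA^{\pm}/(-\Delta)}\,\mathcal{U})^2 + (\sqrt{DB^{\mp}/(-\Delta)}\,\mathcal{V})^2 = 1$. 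Since $f_1,f_2$ are entire of finite order, so are $\mathcal{U},\mathcal{V}$, and Theorem~A together with Lemmas~A and~B produces a non-constant polynomial $h_1$ with $f_1 = D_{11}\cos h_1 - D_{12}\sin h_1 + T_1$ and $\partial f_2/\partial z_1 = E_{11}\cos h_1 + E_{12}\sin h_1 + T_2$. Handling the second equation identically produces a non-constant polynomial $h_2$ with $f_2 = D_{11}\cos h_2 - D_{12}\sin h_2 + T_1$ and $\partial f_1/\partial z_1 = E_{11}\cos h_2 + E_{12}\sin h_2 + T_2$.

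The decisive step is to differentiate the closed expressions for $f_1$ and $f_2$ and equate them with the closed expressions already found for $\partial f_1/\partial z_1$ and $\partial f_2/\partial z_1$. This yields $-(\partial h_1/\partial z_1)(D_{11}\sin h_1 + D_{12}\cos h_1) = E_{11}\cos h_2 + E_{12}\sin h_2 + T_2$ and $-(\partial h_2/\partial z_1)(D_{11}\sin h_2 + D_{12}\cos h_2) = E_{11}\cos h_1 + E_{12}\sin h_1 + T_2$. The crucial asymmetry, absent in the difference setting of Theorem~\ref{th-2.1}, is that the right-hand sides have constant coefficients whereas the left-hand sides carry the polynomial factor $\partial h_j/\partial z_1$. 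Rewriting both sides in exponential form turns each identity into a linear relation among $e^{\pm i h_1}$, $e^{\pm i h_2}$ and $1$ with polynomial coefficients. I would first observe that $\partial h_1/\partial z_1 \not\equiv 0$: otherwise $E_{11}\cos h_2 + E_{12}\sin h_2$ would be constant with $h_2$ non-constant, forcing $E_{12}=0$, contradicting $E_{12} = \xi_1^{\pm}/\sqrt{DB^{\mp}/(-\Delta)} \neq 0$ (valid since $\alpha \neq 0$).

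I would then split into cases according to the exponents. If $h_1 \not\equiv \pm h_2$ modulo a constant, the exponents $\pm i h_1, \pm i h_2, 0$ are pairwise distinct modulo constants, and the several complex variable analogue of Borel's theorem on exponential polynomials (in the spirit of Lemma~C) forces all coefficients to vanish; in particular the coefficient $-\tfrac12(\partial h_1/\partial z_1)(-iD_{11}+D_{12})$ of $e^{i h_1}$ vanishes, so $\partial h_1/\partial z_1 \equiv 0$, a contradiction. If $h_1 - h_2$ is constant, both identities collapse to three-term relations in $e^{\pm i h_2}$ and $1$; vanishing of the three coefficients forces $T_2 = 0$ and $\partial h_1/\partial z_1$ constant, and eliminating this constant and $e^{i(h_1-h_2)}$ between the two identities leaves either $E_{11} - iE_{12} = 0$ or $D_{11}E_{11} = D_{12}E_{12}$. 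The symmetric case $h_1 + h_2 \equiv \text{const}$ leads the same way to $E_{11} + iE_{12} = 0$.

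The final step is to contradict each algebraic alternative from the explicit form of the constants. Writing $s := \sqrt{(a-b)^2+4\alpha^2}$ (nonzero, since otherwise $\xi_1^{\pm},\eta_1^{\pm}$ are undefined), I would use the identities $(\xi_1^{\pm})^2 + (\eta_1^{\pm})^2 = 1$ and $A^{\pm}B^{\mp} = D$: then $E_{11} \pm iE_{12} = 0$ reduces to $(\eta_1^{\pm})^2 B^{\mp} + (\xi_1^{\pm})^2 A^{\pm} = 4as(b-a+s) = 0$, while $D_{11}E_{11} = D_{12}E_{12}$ reduces to $A^{\pm} = B^{\mp}$, i.e.\ $s = 0$; both are impossible under $ab \neq 0$, $\alpha \neq 0$, $s \neq 0$. (Should the branch $D_{11}E_{12}+D_{12}E_{11}=0$ arise, one notes $D_{11}E_{12}+D_{12}E_{11} = -\Delta/(D\sqrt{A^{\pm}B^{\mp}}) \neq 0$ since $\Delta \neq 0$.) Thus every branch fails and no finite order transcendental entire solution exists. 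The main obstacle is precisely the coincidence cases $h_1 \equiv \pm h_2$, where the Borel/growth argument degenerates and one must instead extract explicit quadratic relations among $D_{11},D_{12},E_{11},E_{12}$ and defeat them using $\Delta \neq 0$ and $\alpha \neq 0$.
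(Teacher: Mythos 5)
Your proposal is correct, and its skeleton is the paper's: the same reduction via Theorem A and Lemmas A and B to the forms $f_1=D_{11}\cos h_1-D_{12}\sin h_1+T_1$, $\partial f_2/\partial z_1=E_{11}\cos h_1+E_{12}\sin h_1+T_2$ (and symmetrically in $h_2$), the same differentiation-and-equate step yielding the identities \eqref{eq-3.21} and \eqref{eq-3.23}, and the same terminal algebra. The genuine difference is the middle step. The paper feeds each three-term identity into Lemma C, obtaining the dichotomy $R_{22}\frac{\partial h_1}{\partial z_1}e^{i(h_2(z)-h_1(z))}\equiv1$ or $R_{21}\frac{\partial h_1}{\partial z_1}e^{i(h_1(z)+h_2(z))}\equiv1$ (and likewise for $h_2$), and then runs four cases: the two mixed cases force $h_1$ or $h_2$ constant; both $R_{22}$-branches (your case $h_1-h_2$ constant) give $T_2=0$, $\partial h_1/\partial z_1\equiv c_0$, $c_0^2=R_{23}/(R_{21}R_{22})$ and $R_{22}R_{23}=R_{21}$, hence $D_{11}E_{11}=D_{12}E_{12}$ and $A^{\pm}=B^{\mp}$, a contradiction; both $R_{21}$-branches (your case $h_1+h_2$ constant) give $-1\equiv1$ outright. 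You instead hit the five-term exponential sums with a Borel-type theorem with polynomial coefficients and sort cases by degeneracies among the exponents $\pm h_1,\pm h_2,0$; since Lemma C is itself a Borel-type statement, the two devices are interchangeable here, and your three cases map exactly onto the paper's (your generic case is what Lemma C silently excludes). What your route buys is that it forces into the open two non-degeneracies the paper never verifies: that $iE_{11}-E_{12}\neq0$ and $R_{23}\neq0$ (equivalently $E_{11}\pm iE_{12}\neq0$), which you correctly reduce to the non-vanishing of a multiple of $as\bigl((b-a)\pm s\bigr)$ with $s:=\sqrt{(a-b)^2+4\alpha^2}\neq0$; and it reads the contradiction $A^{\pm}=B^{\mp}$ properly as $s=0$, i.e.\ incompatibility with the very definition of $\xi^{\pm}_1,\eta^{\pm}_1$, rather than the paper's inference that $a=b$ and $\alpha=0$, which is not a valid deduction over $\mathbb{C}$. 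What the paper's route buys, in exchange, is that Lemma C absorbs all distinctness analysis of exponents, so only the counting-function estimates need checking. Two slips on your side, neither damaging: your formula $4as(b-a+s)$ for $(\eta^{\pm}_1)^2B^{\mp}+(\xi^{\pm}_1)^2A^{\pm}$ omits the normalizing factor $1/(2K_{12}^2)$ (and a sign in the lower branch), though the non-vanishing conclusion is all that is used; and in the case $h_1+h_2$ constant the two constant terms of the collapsed identities already yield $1=-1$ directly (as in the paper's Case D), so the detour through $E_{11}+iE_{12}=0$ is avoidable.
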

\begin{proof}[\bf Proof of Theorem \ref{th-2.2}]
We assume that $ (f_1,f_2) $ is a pair of finite order transcendental entire solutions of the system $ \eqref{eq-3.16} $. Considering $ f_1=F+ T_1 $ and $ \frac{\partial f_2}{\partial z_1}=G+T_2 $ in the first equation of the system \eqref{eq-3.16}, and then by using the analogous reasoning utilized in the proof of Theorem \ref{th-2.1}, it follows that there is a non-constant polynomial $ h_1:\mathbb{C}^n \rightarrow \mathbb{C} $ such that
\begin{align}\label{eq-3.17}
	f_1(z)=D_{11}\cos h_1(z) -D_{12}\sin h_1(z)+T_1 ,\\\label{eq-3.18}\dfrac{\partial f_2(z)}{\partial z_1}=E_{11}\cos h_1(z) +E_{12}\sin h_1(z)+T_2 .
\end{align}
Similarly, for the second equation of $ \eqref{eq-3.16} $, there exists a non-constant polynomial $ h_2(z) $ in $ \mathbb{C}^n $ such that
\begin{align}\label{eq-3.19}
	f_2(z)=D_{11}\cos h_2(z) -D_{12}\sin h_2(z)+T_1 ,\\\label{eq-3.20}\dfrac{\partial f_1(z)}{\partial z_1}=E_{11}\cos h_2(z) +E_{12}\sin h_2(z)+T_2.
\end{align}
Using $ \eqref{eq-3.17}\;\mbox{and}\;\eqref{eq-3.20} $, an easy computation shows that
\begin{align*}
	\left(-D_{11}\sin h_1(z)-D_{12}\cos h_1(z)\right)\dfrac{\partial h_1(z)}{\partial z_1}=E_{11}\cos h_2(z)+E_{12}\sin h_2(z)+T_2
\end{align*}
which turns out that
\begin{align}\label{eq-3.21}
	R_{21}\dfrac{\partial h_1(z)}{\partial z_1}e^{i(h_1(z)+h_2(z))}+ R_{22}\dfrac{\partial h_1(z)}{\partial z_1}e^{i(h_2(z)-h_1(z))} -(R_{23}e^{ih_2(z)}+R_{24})e^{ih_2(z)}=1,
\end{align}
where
\begin{align}\label{eq-3.22}
	\begin{cases}
		R_{21}=\dfrac{-D_{11}-iD_{12}}{iE_{11}-E_{12}},\;\; R_{22}=\dfrac{D_{11}-iD_{12}}{iE_{11}-E_{12}},\vspace{2mm}\\ R_{23}=\dfrac{iE_{11}+E_{12}}{iE_{11}-E_{12}},\;\; R_{24}=\dfrac{2iT_2}{iE_{11}-E_{12}}.
	\end{cases}
\end{align}
If $\frac{\partial h_1(z)}{\partial z_1}\equiv 0$, then it follows from $\eqref{eq-3.21}$ that $ -(R_{23}e^{ih_2(z)}+R_{24})e^{ih_2(z)}\equiv 1 $, which implies that $h_2$ must be a constant, which is a contradiction. Therefore, we must have $ \frac{\partial h_1(z)}{\partial z_1}\not\equiv 0 $. It is easy to see that equation \eqref{eq-3.21} is of the form $g_{31} (z) + g_{32}(z) + g_{33}(z) = 1$, where
\begin{align*}
	\begin{cases}
		g_{31}(z)= 	R_{21}\frac{\partial h_1(z)}{\partial z_1}e^{i(h_1(z)+h_2(z))},\vspace{2mm}\\ g_{32}(z)= R_{22}\frac{\partial h_1(z)}{\partial z_1}e^{i(h_2(z)-h_1(z))},\vspace{2mm}\\ g_{33}(z)= -(R_{23}e^{ih_2(z)}+R_{24})e^{ih_2(z)}.
	\end{cases}
\end{align*}
We also observe that
\begin{align*}
	\overline{N}\left(r, R_{23}e^{2ih_2(z)}+R_{24}e^{ih_2(z)}\right) =N_2\left(r, \frac{1}{R_{23} e^{2ih_2(z)}+R_{24}e^{ih_2(z)}}\right) =S(r,f),
\end{align*}
\begin{align*}
	\overline{N}\left(r, R_{21}\frac{\partial h_1(z)}{\partial z_1}e^{i(h_1(z)+h_2(z))}\right)=N_2\left(r, \frac{1}{R_{21}\frac{\partial h_1(z)}{\partial z_1}e^{i(h_1(z)+h_2(z))}}\right)=S(r,f)
\end{align*}
and
\begin{align*}
	\overline{N}\left(r, R_{22}\frac{\partial h_1(z)}{\partial z_1}e^{i(h_2(z)-h_1(z))}\right)=N_2\left(r, \frac{1}{R_{22}\frac{\partial h_1(z)}{\partial z_1}e^{i(h_2(z)-h_1(z))}}\right)=S(r,f).
\end{align*}
Because of $\lambda T(r,g_{31})> 0$, the strict inequality 
\begin{align*}
	||\sum_{j=1}^{3}\left\{N_2\left(r,\frac{1}{g_{3j}}\right)+2\overline{N}(r,g_{3j})\right\}<\lambda T(r,g_{31}) + O(\log^{+} T(r,g_{31}))
\end{align*} 
is satisfied for all $r$ outside possibly a set with finite logarithmic measure, where $\lambda<1$ is a positive number. Thus, using \eqref{eq-3.21}, in view of Lemma C, we obtain
\begin{align*}
	R_{22}\dfrac{\partial h_1(z)}{\partial z_1}e^{i(h_2(z)-h_1(z))}\equiv 1, \;\;\;\;\mbox{or}\;\;\;\; R_{21}\dfrac{\partial h_1(z)}{\partial z_1}e^{i(h_1(z)+h_2(z))}\equiv 1.
\end{align*}
Moreover, from \eqref{eq-3.18}\;\mbox{and}\;\eqref{eq-3.19}, an computation shows that
\begin{align*}
	\left(-D_{11}\sin h_2(z)-D_{12}\cos h_2(z)\right)\dfrac{\partial h_2(z)}{\partial z_1}=E_{11}\cos h_1(z)+E_{12}\sin h_1(z)+T_2,
\end{align*}
which can be written as 
\begin{align}\label{eq-3.23}
	R_{21}\dfrac{\partial h_2(z)}{\partial z_1}e^{i(h_2(z)+h_1(z))}+ R_{22}\dfrac{\partial h_2(z)}{\partial z_1}e^{i(h_1(z) -h_2(z))}-(R_{23}e^{ih_1(z)}+R_{24})e^{ih_1(z)}=1.
\end{align}
If $ \frac{\partial h_2(z)}{\partial z_1}\equiv 0 $, then it follows from $ \eqref{eq-3.23} $ that $ -(R_{23}e^{ih_1(z)}+R_{24})e^{ih_1(z)}\equiv 1 $, which implies that $h_1$ is a constant, a contradiction. Therefore, we must have $ \frac{\partial h_2(z)}{\partial z_1}\not\equiv 0 $. Now, we see that equation \eqref{eq-3.23} is of the form $g_{41}(z) + g_{42}(z) + g_{43}(z) = 1$, where
\begin{align*}
	\begin{cases}
		g_{41}(z)= 	R_{21}\frac{\partial h_2(z)}{\partial z_1}e^{i(h_2(z) +h_1(z))},\vspace{2mm}\\ g_{42}(z)= R_{22}\frac{\partial h_2(z)}{\partial z_1}e^{i(h_1(z) -h_2(z))},\vspace{2mm}\\ g_{43}(z)=-(R_{23}e^{ih_1(z)}+ R_{24})e^{ih_1(z)}.
	\end{cases}
\end{align*}
Moreover, we note that
\begin{align*}
	\overline{N}\left(r, R_{23}e^{2ih_1(z)}+R_{24}e^{ih_1(z)}\right) =N_2\left(r, \frac{1}{R_{23} e^{2ih_1(z)}+R_{24}e^{ih_1(z)}}\right) =S(r,f),
\end{align*}
\begin{align*}
	\overline{N}\left(r, R_{21}\frac{\partial h_2(z)}{\partial z_1}e^{i(h_2(z)+ h_1(z))}\right)=N_2\left(r, \frac{1}{R_{21}\frac{\partial h_2(z)}{\partial z_1}e^{i(h_2(z)+h_1(z))}}\right)=S(r,f)
\end{align*}
and
\begin{align*}
	\overline{N}\left(r, R_{22}\frac{\partial h_2(z)}{\partial z_1}e^{i(h_1(z) -h_2(z))}\right)=N_2\left(r, \frac{1}{R_{22}\frac{\partial h_2(z)}{\partial z_1}e^{i(h_1(z) -h_2(z))}}\right)=S(r,f).
\end{align*}
Because of $\lambda T(r,g_{41})> 0$, the strict inequality 
\begin{align*}
	||\sum_{j=1}^{3}\left\{N_2\left(r,\frac{1}{g_{4j}}\right)+2\overline{N}(r,g_{4j})\right\}<\lambda T(r,g_{41}) + O(\log^{+} T(r,g_{41}))
\end{align*} 
is satisfied for all $r$ outside possibly a set with finite logarithmic measure, where $\lambda<1$ is a positive number. By Lemma C, we easily obtain from \eqref{eq-3.23} that
\begin{align*}
	R_{22}\dfrac{\partial h_2(z)}{\partial z_1}e^{i(h_1(z)-h_2(z))}\equiv 1, \;\;\;\;\mbox{or}\;\;\;\; R_{21}\dfrac{\partial h_2(z)}{\partial z_1}e^{i(h_2(z)+h_1(z))}\equiv 1.
\end{align*}
To conclude the proof, we only need to consider the subsequent cases.\vspace{1.2mm}
	
\noindent{\bf Case A.} Suppose that
\begin{align}\label{eq-3.24}
	\begin{cases}
		R_{22}\dfrac{\partial h_1(z)}{\partial z_1}e^{i(h_2(z)-h_1(z))}\equiv 1 \vspace{2mm}\\
		R_{22}\dfrac{\partial h_2(z)}{\partial z_1}e^{i(h_1(z)-h_2(z))}\equiv 1
	\end{cases}
\end{align}
By using these two equations, we can easily deduce that $h_1(z) - h_2(z)$ is equal to a constant in $\mathbb{C}$, which we denote as $\lambda_1$. Hence, both $ \frac{\partial h_1(z)}{\partial z_1}\;\mbox{and}\; \frac{\partial h_2(z)}{\partial z_1} $ must be constants. Assume that $ \frac{\partial h_1(z)}{\partial z_1}\equiv c_0 $. Since $ h_1$ and $h_2 $ are non-constant polynomials in $ \mathbb{C}^n $, hence $ h_1(z)=c_0z_1+p(z_2,\ldots,z_n)+b_1 $, where $ p(z_2,\ldots,z_n) $ is a polynomial in $z_2,\ldots,z_n$ only in $ \mathbb{C}^{n} $ and $ b_1\in\mathbb{C} $. Since $ h_1(z)-h_2(z)=\lambda_1 $, we see that $ h_2(z)=c_0z_1+ p(z_2,\ldots,z_n)+b_2 $, where $ b_2\in\mathbb{C} $. Thus, the equation \eqref{eq-3.24} becomes
\begin{align}\label{eq-3.25}
	\begin{cases}
		R_{22}c_0e^{i(b_2-b_1)}\equiv 1,\vspace{2mm}\\ R_{22}c_0e^{i(b_1-b_2)}\equiv 1.
	\end{cases}
\end{align}
Using \eqref{eq-3.21} and the first equation of \eqref{eq-3.24}, we have
\begin{align*}
	R_{21}\frac{\partial h_1(z)}{\partial z_1}e^{ih_1(z)}-R_{23}e^{ih_2(z)}= R_{24},
\end{align*}
or,
\begin{align*}
	R_{21}c_0e^{i(b_1-b_2)}-R_{23}= R_{24}e^{-ih_2(z)},
\end{align*}
which implies that $R_{24}$ must be $0$. Otherwise, $h_2$ must be a constant, which is a contradiction. Therefore, we get $T_2=0$.  Hence, we get
\begin{align}\label{Eq-3.32}
	R_{21}c_0e^{i(b_1-b_2)}= R_{23}.
\end{align}
Again, in consideration of \eqref{eq-3.23} and the second equation of \eqref{eq-3.24}, it is easy to see that
\begin{align*}
	R_{21}\frac{\partial h_2(z)}{\partial z_1}e^{ih_2(z)}- R_{23}e^{ih_1(z)}= R_{24},
\end{align*}
or,
\begin{align*}
	R_{21}c_0e^{i(b_2-b_1)}-R_{23}= R_{24}e^{-ih_1(z)},
\end{align*}
which implies that $R_{24}$ must be $0$. Otherwise, $h_1$ must be a constant, which is a contradiction. Consequently $T_2=0$.  Hence, we get
\begin{align}\label{Eq-3.33}
	R_{21}c_0e^{i(b_2-b_1)}= R_{23}.
\end{align}
A simple computation using the equations in \eqref{eq-3.25}, \eqref{Eq-3.32} and \eqref{Eq-3.33} shows that
\begin{align*}
	c_0^2=\frac{R_{23}}{R_{21}R_{22}}\;\;\;\;\;\mbox{and}\;\;\;\; R_{22}R_{23}=R_{21}.
\end{align*}
Thus we have $ 2i\xi^{\pm}_1\eta^{\pm}_1(A^{\pm} - B^{\mp})=0 $ which implies that $ A^{\pm}=B^{\mp} $, hence
\begin{align*}
	+\sqrt{(a-b)^2+4\alpha^2}=-\sqrt{(a-b)^2+4\alpha^2}\;\;\mbox{or}\;\;-\sqrt{(a-b)^2+4\alpha^2}=+\sqrt{(a-b)^2+4\alpha^2}
\end{align*}
Therefore, both the equations yield that $ 2\sqrt{(a-b)^2+4\alpha^2}=0 $. Consequently, it follows that $ a=b $ and $ \alpha=0 $, which contradicts $ \alpha^2\neq ab $. \vspace{1.2mm}
	
\noindent{\bf Case B.} Let
\begin{align*}
	\begin{cases}
		R_{22}\dfrac{\partial h_1(z)}{\partial z_1}e^{i(h_2(z)-h_1(z))}\equiv 1,\vspace{2mm}\\  R_{21}\dfrac{\partial h_2(z)}{\partial z_1}e^{i(h_2(z)+h_1(z))}\equiv 1.	
	\end{cases}
\end{align*}
By the similar argument used in Case A, we see that $ h_2(z)-h_1(z)=\lambda_1 $ and $  h_2(z)+h_1(z)=\lambda_2  $, where $ \lambda_1 $ and $ \lambda_2 $ are two constants in $ \mathbb{C} $. Thus it follows that $ 2h_2(z)=\lambda_1+\lambda_2 $, a constant, which contradicts the fact that $ h_2(z) $ is non-constant.\vspace{1.2mm}
	
\noindent{\bf Case C.} Suppose that
\begin{align*}
	\begin{cases}
		R_{21}\dfrac{\partial h_1(z)}{\partial z_1}e^{i(h_1(z)+h_2(z))}\equiv 1,\vspace{2mm} \\R_{22}\dfrac{\partial h_2(z)}{\partial z_1}e^{i(h_1(z)-h_2(z))}\equiv 1.
	\end{cases}
\end{align*}
The argument used in Case B can be applied similarly to lead to a contradiction. \vspace{1.2mm}
	
\noindent{\bf Case D.} Let
\begin{align}\label{eq-3.26}
	\begin{cases}
		R_{21}\dfrac{\partial h_1(z)}{\partial z_1}e^{i(h_1(z)+h_2(z))}\equiv 1,\vspace{2mm}\\	R_{21}\dfrac{\partial h_2(z)}{\partial z_1}e^{i(h_2(z)+h_1(z))}\equiv 1.
	\end{cases}
\end{align}
In view of the equations in $ \eqref{eq-3.26} $, it follows that $ h_1(z)+h_2(z)\equiv \mbox{constant} $, and hence both $ \frac{\partial h_1(z)}{\partial z_1}\;\mbox{and}\; \frac{\partial h_2(z)}{\partial z_1} $ are constants in $ \mathbb{C} $. By using the same reasoning as in Case A, we can readily obtain $ h_1(z)=d_0z_1+q(z_2,\ldots,z_n)+b_1 $ and $ h_2(z)=-d_0z_1-q(z_2,\ldots,z_n)+b_2 $, where $ q(z_2,\ldots,z_n) $ is a polynomial of $z_2,\ldots,z_n$ only in $ \mathbb{C}^{n} $ and $ b_1,b_2\in\mathbb{C} $. Thus, it follows from $ \eqref{eq-3.26} $ that 
\begin{align*}
	\begin{cases}
		R_{21}re^{i(b_1+b_2)}\equiv 1,\vspace{2mm}\\-R_{21}re^{i(b_1+b_2)}\equiv 1,
	\end{cases}
\end{align*}
from which we obtain that $ -1\equiv 1 $, which is absurd. This completes the proof.
\end{proof}	
\begin{rem}
We see that the system of equations in Theorem \ref{th-2.2} does not admit a pair of finite transcendental entire solutions. However, by replacing $f_1(z)$ and $f_2(z)$ with $f_1(z+c)$ and $f_2(z+c)$ respectively, a pair of finite order transcendental entire solutions can be obtain for $c=(c_1,\ldots,c_n)\setminus\{(0,\ldots, 0)\}$. This can be seen in Theorem \ref{th-2.3} below.
\end{rem}\vspace{2mm}

\subsection{\bf System of GQE type partial differential-difference equations:}
In the next result, using Nevanlinna theory, we confirm the existence of solutions to a system of the general quadratic differential-difference functional equations in $ \mathbb{C}^n $ and obtain the precise form of those solutions, which answers Question \ref{q-1.1}.
\begin{thm}\label{th-2.3}
Let $ c=(c_1,\ldots,c_n)\in\mathbb{C}^n\setminus\{(0,\ldots, 0)\} $, $d_1\in\mathbb{C}\setminus\{0\}$ and $ a,b,C, \alpha, \beta, \gamma$ in $\mathbb{C} $ with $ab\neq 0$ and  $\Delta\neq 0$, and $\alpha^2\neq 0, ab$. Then any pair of finite order transcendental entire solutions in $\mathbb{C}^n$ for the system of partial differential-difference equations
\begin{align}\label{eq-3.27}
 	\begin{cases}
 		\mathcal{A}\left(f_1(z+c),\; \dfrac{\partial f_2(z)}{\partial z_1}\right)=0,\vspace{2mm}\\
 		\mathcal{A}\left(f_2(z+c),\; \dfrac{\partial f_1(z)}{\partial z_1}\right)=0
 	\end{cases}
\end{align}
must be one of the forms
\begin{enumerate}
	\item [(i)]$ f_1(z)=D_{11}\cos[L(z)-L(c)+b_1] -D_{12}\sin[L(z)-L(c)+b_1]+T_1 $,\vspace{2mm}\\ 
	 $ f_2(z)=D_{11}\cos[L(z)-L(c)+b_2] -D_{12}\sin[L(z)-L(c)+b_2]+T_1  $,
\end{enumerate}
where $ L(z)=\sum_{r=1}^{n} a_r z_r $ and $ b_1,b_2\in\mathbb{C} $ satisfy
\begin{align*}
	T_2=0,\;\;\;a^2_1\equiv\frac{R_{23}}{R_{21}R_{22}},\;\;\;e^{2i(b_1-b_2)}\equiv 1, \;\;\; e^{2iL(c)}\equiv\frac{R_{21}}{R_{22}R_{23}},
\end{align*}
for
\begin{align*}
	R_{21}=\frac{-D_{11}-iD_{12}}{iE_{11}-E_{12}},\;\; R_{22}=\frac{D_{11}-iD_{12}}{iE_{11}-E_{12}},\;\; R_{23}=\frac{iE_{11}+E_{12}}{iE_{11}-E_{12}}.
\end{align*}\vspace{2mm}

\begin{enumerate}
	\item [(ii)] $ f_1(z)=D_{11}\cos[L(z)-L(c)+b_1]- D_{12}\sin[L(z)-L(c)+b_1]+T_1,\vspace{2mm}\\   f_2(z) =D_{11}\cos[L(z)-L(c)+b_2] + D_{12}\sin[L(z)-L(c)+b_2] +T_1 $,
\end{enumerate}
 $ D_{11},D_{12} $ and $L(z)$ are stated as in $(i)$ and $ b_1,b_2 \in\mathbb{C} $ satisfy
\begin{align*}
	T_2=0,\;\;\; a^2_1\equiv \frac{R_{23}}{R_{21}R_{22}},\;\;\; e^{2i(b_1-b_2)}\equiv-\frac{R_{22}}{R_{21}R_{23}},\;\;\; e^{2iL(c)}\equiv -1,
\end{align*}
where $ R_{21},R_{22}\;\mbox{and}\;R_{23} $ are stated in $ (i) $.
\end{thm}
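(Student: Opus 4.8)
The plan is to follow closely the strategy of the proofs of Theorems \ref{th-2.1} and \ref{th-2.2}, exploiting the extra shift $z\mapsto z+c$ to rescue solvability exactly where the purely differential system of Theorem \ref{th-2.2} failed. First I would assume $(f_1,f_2)$ is a pair of finite order transcendental entire solutions and reduce each equation of \eqref{eq-3.27} to Circular form: writing $f_1(z+c)=F+T_1$ and $\partial f_2/\partial z_1=G+T_2$ in the first equation and completing the square (non-singularity $\Delta\neq 0$, $D=ab-\alpha^2\neq 0$ guarantees $T_1,T_2$ are well defined and that $\mathcal{A}=0$ becomes $aF^2+2\alpha FG+bG^2+\Delta/D=0$), then rotating $(F,G)\mapsto(\mathcal{U},\mathcal{V})$ through $\xi_1^{\pm},\eta_1^{\pm}$ to reach $(\sqrt{DA^{\pm}/-\Delta}\,\mathcal{U})^2+(\sqrt{DB^{\mp}/-\Delta}\,\mathcal{V})^2=1$. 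Since $F,G,\mathcal{U},\mathcal{V}$ are all finite order entire, Theorem A together with Lemmas A and B supplies a non-constant polynomial $h_1$ with
\[ f_1(z+c)=D_{11}\cos h_1 - D_{12}\sin h_1 + T_1, \qquad \partial f_2/\partial z_1 = E_{11}\cos h_1 + E_{12}\sin h_1 + T_2, \]
and symmetrically a non-constant polynomial $h_2$ governing $f_2(z+c)$ and $\partial f_1/\partial z_1$.

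The crucial new step is to exploit the differential-difference coupling. Differentiating the expression for $f_1(z+c)$ in $z_1$ gives $(\partial_1 f_1)(z+c)=(-D_{11}\sin h_1-D_{12}\cos h_1)\,\partial h_1/\partial z_1$; evaluating the formula for $\partial f_1/\partial z_1$ at the shifted point $z+c$ and equating yields an identity linking $h_1(z)$ and $h_2(z+c)$, which in exponential form reads
\[ R_{21}\tfrac{\partial h_1}{\partial z_1}e^{i(h_1(z)+h_2(z+c))}+R_{22}\tfrac{\partial h_1}{\partial z_1}e^{i(h_2(z+c)-h_1(z))}-\bigl(R_{23}e^{ih_2(z+c)}+R_{24}\bigr)e^{ih_2(z+c)}=1, \]
with $R_{2j}$ as in \eqref{eq-3.22}, while a symmetric identity couples $h_2(z)$ and $h_1(z+c)$. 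I would then verify the Nevanlinna smallness estimates for the three summands (they have no zeros or poles and their characteristic is absorbed into $S(r,f)$) and invoke Lemma C on each identity to obtain a dichotomy, producing four cases exactly as in Theorem \ref{th-2.2}.

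Among the four cases I expect the mixed ones to collapse: combining the two surviving exponential identities forces $h_1(z-c)+h_1(z+c)$ to be constant, whose top-degree part is twice that of $h_1$, contradicting non-constancy. The two symmetric cases survive. In the first, the relations force $\partial h_1/\partial z_1\equiv a_1$ and $\partial h_2/\partial z_1$ to be constants and $h_2(z+c)-h_1(z)\equiv\lambda_1$, $h_1(z+c)-h_2(z)\equiv\lambda_2$; feeding these back into the full identities (dividing by $e^{ih_2(z+c)}$, etc.) forces $R_{24}=0$, i.e. $T_2=0$, together with $R_{21}a_1=R_{23}e^{i\lambda_1}=R_{23}e^{i\lambda_2}$ and $R_{22}a_1e^{i\lambda_1}=1$. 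These close up to $a_1^2\equiv R_{23}/(R_{21}R_{22})$, $e^{2i(b_1-b_2)}\equiv 1$ and $e^{2iL(c)}\equiv R_{21}/(R_{22}R_{23})$; translating $f_1(z+c)\mapsto f_1(z)$ via $z\mapsto z-c$ produces the argument $L(z)-L(c)+b_i$ of form (i), and the other surviving case gives the sign flip of form (ii) through $h_2=\sigma_1-h_1(z-c)$.

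The hard part will be pinning down the exact form of $h_1$. The constancy of $\partial h_1/\partial z_1$ forced by the differentiation is precisely what collapses the periodic-polynomial freedom $\Psi(z)$ present in Theorem \ref{th-2.1} down to a linear $L(z)$; I must argue carefully, using $\partial_{z_1}h_1\equiv a_1$ jointly with the double-shift relation $h_1(z+2c)-h_1(z)\equiv\text{const}$, that the surviving ambiguity reduces to the affine form $L(z)+b_i$. I then have to verify that the shift $c$ — absent in Theorem \ref{th-2.2}, where the analogous step produced the impossible coefficient identity $R_{22}R_{23}=R_{21}$ (forcing $a=b,\alpha=0$) — now makes the phase condition $e^{2iL(c)}\equiv R_{21}/(R_{22}R_{23})$ genuinely solvable, which is exactly why this system admits transcendental solutions while the one in Theorem \ref{th-2.2} does not.
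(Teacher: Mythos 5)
Your proposal tracks the paper's proof essentially step for step: the same completing-the-square and rotation reduction of each equation to circular form, the same use of Theorem A with Lemmas A and B to produce polynomial phases $h_1,h_2$ in the representations of $f_1(z+c),\,\partial f_2/\partial z_1$ and $f_2(z+c),\,\partial f_1/\partial z_1$, the same coupling identity obtained by differentiating $f_1(z+c)$ in $z_1$ and comparing with the shifted formula for $\partial f_1/\partial z_1$, the same smallness estimates feeding Lemma C, and the same four-case analysis in which the mixed cases die (a sum of two shifts of a non-constant polynomial cannot be constant) and the two symmetric cases produce forms (i) and (ii), including the derivation of $R_{24}=0$, hence $T_2=0$, and the constant relations $a_1^2\equiv R_{23}/(R_{21}R_{22})$, $e^{2i(b_1-b_2)}\equiv 1$ or $\equiv -R_{22}/(R_{21}R_{23})$, and the conditions on $e^{2iL(c)}$.

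The one place where you stop short is exactly the step you call ``the hard part,'' and your proposed way of closing it would not work. You plan to deduce that $h_1,h_2$ are affine from the two facts $\partial_{z_1}h_1\equiv a_1$ and $h_1(z+2c)-h_1(z)\equiv\text{const}$; but these two conditions do not imply affineness. For instance, in $\mathbb{C}^3$ with $c=(1,1,-1)$, the polynomial $h_1(z)=a_1z_1+(z_2+z_3)^2+b_1$ satisfies both (and if $c_2=\cdots=c_n=0$, then $h_1(z)=a_1z_1+q(z_2,\ldots,z_n)+b_1$ works for \emph{any} polynomial $q$). Such a phase omits $z_1$ in its nonlinear part and is periodic in the remaining variables, so it also cancels out of $h_2(z+c)-h_1(z)$ and of $\partial_{z_1}h_1$, meaning it survives every identity used in Case 1; killing it requires an argument beyond the two facts you cite (or a weakening of the stated conclusion). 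To be fair, this is precisely the point the paper itself dispatches with ``from \eqref{eq-3.35} it is easy to see that the polynomial $\Psi(z)$ must be linear'' --- constancy of $\partial_{z_1}h_1$ only forces the part of $\Psi$ in \eqref{Eq-3.1} that genuinely involves $z_1$ to be linear, not the part built from directions $s$ with zero $z_1$-coefficient. So your reconstruction is faithful to the paper, including its weakest step, and your instinct about where the real difficulty lies is correct; but neither your sketch nor the paper's ``easy to see'' actually closes that gap.
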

\begin{proof}[\bf Proof of Theorem \ref{th-2.3}]
Suppose that $ (f_1,f_2) $ is a pair of finite order transcendental entire solutions of the system $ \eqref{eq-3.27} $. We consider $ f_1= F+ T_1 $ and $ \frac{\partial f_2}{\partial z_1}= G+ T_2 $, in the first equation of the system \eqref{eq-3.27} and by the similar arguments being used in the proof of Theorem \ref{th-2.1}, it follows that there exists a non-constant polynomial $ h_1:\mathbb{C}^n\rightarrow \mathbb{C} $ such that
\begin{align}\label{eq-3.28}
	f_1(z+c)=D_{11}\cos h_1(z) -D_{12}\sin h_1(z)+T_1 ,\\\label{eq-3.29}\dfrac{\partial f_2(z)}{\partial z_1}=E_{11}\cos h_1(z) +E_{12}\sin h_1(z)+T_2 .
\end{align}
Similarly, for the second equation of the system $ \eqref{eq-3.27} $, there exists a non-constant polynomial $ h_2(z) $ in $ \mathbb{C}^n $ such that
\begin{align}\label{eq-3.30}
	f_2(z+c)=D_{11}\cos h_2(z) -D_{12}\sin h_2(z)+T_1 ,\\\label{eq-3.31}\dfrac{\partial f_1(z)}{\partial z_1}=E_{11}\cos h_2(z) +E_{12}\sin h_2(z)+T_2.
\end{align}
From $ \eqref{eq-3.28}\;\mbox{and}\;\eqref{eq-3.31} $ we can deduce that
\begin{align*}
	\left(-D_{11}\sin h_1(z)-D_{12}\cos h_1(z)\right)&\dfrac{\partial h_1(z)}{\partial z_1}\\&=E_{11}\cos h_2(z+c)+E_{12}\sin h_2(z+c)+T_2,
\end{align*}
which can be written as 
\begin{align}\label{eq-3.32}
	R_{21}\dfrac{\partial h_1(z)}{\partial z_1}e^{i(h_1(z)+h_2(z+c))}+ R_{22}\dfrac{\partial h_1(z)}{\partial z_1}& e^{i(h_2(z+c)-h_1(z))} \\& \nonumber -(R_{23}e^{ih_2(z+c)}+R_{24})e^{ih_2(z+c)}=1,
\end{align}
where
\begin{align}\label{eq-3.33}
	\begin{cases}
		R_{21}=\dfrac{-D_{11}-iD_{12}}{iE_{11}-E_{12}},\;\; R_{22}=\dfrac{D_{11}-iD_{12}}{iE_{11}-E_{12}},\vspace{2mm}\\ R_{23}=\dfrac{iE_{11}+E_{12}}{iE_{11}-E_{12}},\;\; R_{24}=\dfrac{2iT_2}{iE_{11}-E_{12}}.
	\end{cases}
\end{align}
Moreover, based on $ \eqref{eq-3.29}\;\mbox{and}\;\eqref{eq-3.30} $, it is evident that
\begin{align*}
	\left(-D_{11}\sin h_2(z)-D_{12}\cos h_2(z)\right)&\dfrac{\partial h_2(z)}{\partial z_1}\\&=E_{11}\cos h_1(z+c)+E_{12}\sin h_1(z+c)+T_2.
\end{align*}
It follows that
\begin{align}\label{eq-3.34}
	R_{21}\dfrac{\partial h_2(z)}{\partial z_1}e^{i(h_1(z+c)+h_2(z))}+ R_{22}\dfrac{\partial h_2(z)}{\partial z_1}&e^{i(h_1(z+c)-h_2(z))}\\& \nonumber -(R_{23}e^{ih_1(z+c)}+R_{24})e^{ih_1(z+c)}=1,
\end{align}
where $ R_{21},R_{22},R_{23},\;\mbox{and}\;R_{24} $ are non-zero constants as defined in $ \eqref{eq-3.33} $. We see that $ \frac{\partial h_1}{\partial z_1}\not\equiv 0 $ and $ \frac{\partial h_2}{\partial z_1}\not\equiv 0 $. Otherwise, from \eqref{eq-3.32} and \eqref{eq-3.34}, it is easy to see that $h_2$ and $h_1$ must be a constants. \vspace{2mm}

We further observe that
\begin{align*}
	&\overline{N}\left(r, R_{23}e^{2ih_2(z+c)}+R_{24}e^{ih_2(z+c)}\right) =N_2\left(r,\frac{1}{R_{23} e^{2ih_2(z+c)}+R_{24} e^{ih_2(z+c)}} \right) =S(r,f), \\& \overline{N}\left(r, R_{23}e^{2ih_1(z+c)} +R_{24}e^{ih_1(z+c)}\right)=N_2\left(r,\frac{1}{R_{23} e^{2ih_1(z+c)}+R_{24}e^{ih_1(z+c)}}\right)=S(r,f),
\end{align*}
\begin{align*}
	&\overline{N}\left(r, R_{21}\frac{\partial h_1(z)}{\partial z_1}e^{i(h_1(z) +h_2(z+c))}\right)=N_2\left(r, \frac{1}{R_{21}\frac{\partial h_1(z)}{\partial z_1}e^{i(h_1(z)+h_2(z+c))}}\right)=S(r,f), \\&
	\overline{N}\left(r, R_{21}\frac{\partial h_2(z)}{\partial z_1}e^{i(h_1(z+c) +h_2(z))}\right)=N_2\left(r, \frac{1}{R_{21}\frac{\partial h_2(z)} {\partial z_1}e^{i(h_1(z+c)+h_2(z))}}\right)=S(r,f)
\end{align*}
and
\begin{align*}
	&\overline{N}\left(r, R_{22}\frac{\partial h_1(z)}{\partial z_1} e^{i(h_2(z+c)-h_1(z))}\right)=N_2\left(r, \frac{1}{R_{22} \frac{\partial h_1(z)}{\partial z_1} e^{i(h_2(z+c)- h_1(z))}}\right)=S(r,f) , \\& \overline{N}\left(r, R_{22}\frac{\partial h_2(z)}{\partial z_1}e^{i(h_1(z+c)-h_2(z))}\right)=N_2\left(r, \frac{1}{R_{22}\frac{\partial h_2(z)}{\partial z_1}e^{i(h_1(z+c)-h_2(z))}}\right)=S(r,f).
\end{align*}
Thus, in view of Lemma C and it yields from \eqref{eq-3.32} and \eqref{eq-3.34} that
\begin{align*}
	R_{22}\dfrac{\partial h_1(z)}{\partial z_1}e^{i(h_2(z+c)-h_1(z))}\equiv 1, \;\;\;\;\mbox{or}\;\;\;\; R_{21}\dfrac{\partial h_1(z)}{\partial z_1}e^{i(h_1(z)+h_2(z+c))}\equiv 1
\end{align*}
and
\begin{align*}
	R_{22}\dfrac{\partial h_2(z)}{\partial z_1}e^{i(h_1(z+c)-h_2(z))}\equiv 1, \;\;\;\;\mbox{or}\;\;\;\; R_{21}\dfrac{\partial h_2(z)}{\partial z_1}e^{i(h_1(z+c)+h_2(z))}\equiv 1.
\end{align*}
To complete the proof, we need only to discuss the following possible cases.\vspace{1.2mm}
	
\noindent{\bf Case 1.} Let
\begin{align}\label{eq-3.35}
	\begin{cases}
		R_{22}\dfrac{\partial h_1(z)}{\partial z_1}e^{i(h_2(z+c)-h_1(z))} \equiv 1 \vspace{2mm}\\
		R_{22}\dfrac{\partial h_2(z)}{\partial z_1}e^{i(h_1(z+c)-h_2(z))} \equiv 1.
	\end{cases}
\end{align}
Hence, it follows that $h_2(z+c)-h_1(z)\equiv\xi_1$ and $h_1(z+c)-h_2(z) \equiv\xi_2 $, where $\xi_1$ and $\xi_2$ are two constant in $\mathbb{C} $. This implies that $h_k(z+2c)-h_k(z)\equiv \xi_1+\xi_2$ for $ k=1,2 $. We claim that $ h_1(z)=L(z)+b_1 $ and $ h_2(z)=L(z)+b_2 $, where $ L(z) $ is a linear function as the form $L(z)=a_1z_1+\cdots+a_nz_n$ and $ b_1, b_2$ are two constant in $\mathbb{C}$. In fact, since $h_1, h_2$ are two non-constant polynomials and  $ h_k(z+2c)-h_k(z)$ are two constants, we conclude that $h_1(z)=L(z)+\Psi(z)+b_1$ and $h_2(z)=L(z)+\Psi(z)+b_2$, where $\Psi(z)$ is a polynomial defined in \eqref{Eq-3.1}. Now from $ \eqref{eq-3.35}$ it is easy to see that the polynomial $\Psi(z)$ must be linear. Thus $L(z)+\Psi(z)$ is still a linear polynomial in $ z_1,\ldots ,z_n $. Hence, we have $h_1(z)=L(z)+b_1$ and $h_2(z)=L(z)+b_2$. Therefore, the equation \eqref{eq-3.35} becomes
\begin{align}\label{Eq-3.44}
	\begin{cases}
		R_{22}a_1e^{i(L(c)-b_1+b_2)}\equiv 1,\vspace{2mm}\\	R_{22}a_1e^{i(L(c)+b_1-b_2)}\equiv 1.
	\end{cases}
\end{align}
By \eqref{eq-3.21} and the first equation of \eqref{eq-3.35}, we see that
\begin{align*}
	R_{21}\frac{\partial h_1(z)}{\partial z_1}e^{ih_1(z)}-R_{23}e^{ih_2(z+c)}= R_{24},
\end{align*}
or,
\begin{align*}
	R_{21}a_1e^{i(-L(c)+b_1-b_2)}-R_{23}= R_{24}e^{-ih_2(z+c)}.
\end{align*}
Since the left-hand side is constant, the right-hand side must be constant as well. Hence either $R_{24}=0$, which yields $T_2=0$, or $h_2$ is constant, which contradicts our assumptions. Therefore $R_{24}=0$ and consequently $T_2=0$. Hence, we get
\begin{align}\label{Eq-3.45}
	R_{21}a_1e^{i(-L(c)+b_1-b_2)}= R_{23}.
\end{align}
Again, using \eqref{eq-3.34} and the second equation of \eqref{eq-3.35}, it is easy to see that
\begin{align*}
	R_{21}\frac{\partial h_2(z)}{\partial z_1}e^{ih_2(z)}- R_{23}e^{ih_1(z+c)}= R_{24},
\end{align*}
or,
\begin{align*}
	R_{21}a_1e^{i(-L(c)-b_1+b_2)}-R_{23}= R_{24}e^{-ih_1(z+c)},
\end{align*}
which implies that $R_{24}$ must be $0$. Otherwise, $h_1$ must be a constant, which is a contradiction. Therefore $R_{24}=0$, implies that $T_2=0$.  Hence, we get
\begin{align}\label{Eq-3.46}
	R_{21}a_1e^{i(-L(c)-b_1+b_2)}= R_{23}.
\end{align}
In view of \eqref{Eq-3.44}, \eqref{Eq-3.45} and \eqref{Eq-3.46} simple computation shows that
\begin{align*}
	a^2_1\equiv\frac{R_{23}}{R_{21}R_{22}},\;\;e^{2i(b_1-b_2)}\equiv 1 \;\;\mbox{and}\;\; e^{2iL(c)}\equiv\frac{R_{21}}{R_{22}R_{23}}.
\end{align*}
By \eqref{eq-3.28} and \eqref{eq-3.30}, it is easy to see that 
\begin{align*}
	f_1(z)&=D_{11}\cos h_1(z-c) -D_{12}\sin h_1(z-c)+T_1,\\& =D_{11}\cos(L(z)-L(c)+b_1) -D_{12}\sin(L(z)-L(c)+b_1)+T_1
\end{align*}
and
\begin{align*}
	f_2(z)&=D_{11}\cos h_2(z-c) -D_{12}\sin h_2(z-c)+T_1,\\& =D_{11}\cos(L(z)-L(c)+b_2) -D_{12}\sin(L(z)-L(c)+b_2)+T_1.
\end{align*}
	
\noindent{\bf Case 2.} Suppose that
\begin{align*}
	\begin{cases}
		R_{22}\dfrac{\partial h_1(z)}{\partial z_1}e^{i(h_2(z+c)-h_1(z))}\equiv 1,\vspace{2mm}  \\R_{21}\dfrac{\partial h_2(z)}{\partial z_1}e^{i(h_1(z+c)+h_2(z))}\equiv 1,
	\end{cases}
\end{align*}
The above equations yield that $h_2(z+c)-h_1(z)\equiv\xi_1  $ and $ h_1(z+c)+h_2(z)\equiv \xi_2 $, where $ \xi_1,\xi_2 $ are two constants in $ \mathbb{C} $. It is easy to see that $ h_2(z+2c)+h_2(z)\equiv \xi_1+\xi_2 $, which is a contradiction as $ h_2(z) $ is a non-constant polynomial in $ \mathbb{C}^n $.\vspace{1.2mm}
	
\noindent{\bf Case 3.} Let
\begin{align*}
	\begin{cases}
		R_{21}\dfrac{\partial h_1(z)}{\partial z_1}e^{i(h_2(z+c)+h_1(z))}\equiv 1,\vspace{2mm}\\ R_{22}\dfrac{\partial h_2(z)}{\partial z_1}e^{i(h_1(z+c)-h_2(z))}\equiv 1.	
	\end{cases}
\end{align*}
By using the similar argument, we see that $h_2(z+c)+h_1(z)\equiv\xi_1  $ and $ h_1(z+c)-h_2(z)\equiv \xi_2 $, where $ \xi_1,\xi_2 $ are two constants in $ \mathbb{C} $. It is easy to see that $ h_1(z+2c)+h_1(z)\equiv \xi_1+\xi_2 $, which is a contradiction as $ h_1(z) $ is a non-constant polynomial in $ \mathbb{C}^n $.\vspace{1.2mm}
	
\noindent{\bf Case 4.} Suppose that
\begin{align}\label{eq-3.36}
	\begin{cases}
		R_{21}\dfrac{\partial h_1(z)}{\partial z_1}e^{i(h_2(z+c)+h_1(z))}\equiv 1,\vspace{2mm}\\ R_{21}\dfrac{\partial h_2(z)}{\partial z_1}e^{i(h_1(z+c)+h_2(z))}\equiv 1.
	\end{cases}
\end{align}
Therefore, it follows that $ h_2(z+c)+h_1(z)\equiv \xi_1 $ and $ h_1(z+c)+h_2(z)\equiv \xi_2 $, where $ \xi_1 $ and $ \xi_2 $ are two constants in $ \mathbb{C} $. This implies that $ h_1(z+2c)-h_1(z)\equiv \xi_2-\xi_1 $ and $ h_2(z+2c)-h_2(z)\equiv \xi_1-\xi_2 $. Now we claim that $ h_1(z)=L(z)+b^{*}_1 $ and $ h_2(z)=-L(z)+b^{*}_2 $, where $ L(z) $ is a linear function as the form $ L(z)=a_1z_1+\cdots+a_nz_n $ and $ b^{*}_1,b^{*}_2 $ are two constants in $ \mathbb{C} $. In fact, since $ h_1(z),h_2(z) $ are two non-constant polynomials and  $ h_k(z+2c)-h_k(z)$ are two constant, we conclude that $ h_1(z)=L(z)+\Psi(z)+b^{*}_1 $ and $ h_2(z)=-L(z)-\Psi(z)+b^{*}_2 $, where $\Psi(z)$ is a polynomial defined in \eqref{Eq-3.1}. Now from $ \eqref{eq-3.36} $ it is easy to see that the polynomial $\Psi(z)$ must be linear. Thus $L(z)+H(s)$ is still a linear polynomial in $ z_1,\ldots,z_n $. Hence, we have $ h_1(z)=L(z)+b^{*}_1 $ and $ h_2(z)=-L(z)+b^{*}_2 $. Thus, the equation \eqref{eq-3.36} becomes
\begin{align}\label{Eq-3.48}
	\begin{cases}
		R_{21}a_1e^{i(-L(c)+b^{*}_1+b^{*}_2)}\equiv 1,\vspace{2mm}\\	-R_{21}a_1e^{i(L(c)+b^{*}_1+b^{*}_2)}\equiv 1.
	\end{cases}
\end{align}
Considering \eqref{eq-3.32} and the first equation of \eqref{eq-3.36}, we have
\begin{align*}
	R_{22}\frac{\partial h_1(z)}{\partial z_1}e^{-ih_1(z)}- R_{23}e^{ih_2(z+c)}= R_{24},
\end{align*}
or,
\begin{align*}
	R_{22}a_1e^{i(L(c)-b^{*}_1-b^{*}_2)}-R_{23}= R_{24}e^{-ih_2(z+c)},
\end{align*}
which implies that $R_{24}$ must be $0$. Otherwise, $h_2$ must be a constant, which is a contradiction. Therefore $R_{24}=0$ and consequently $T_2=0$.  Hence, we get
\begin{align}\label{Eq-3.49}
	R_{22}a_1e^{i(L(c)-b^{*}_1-b^{*}_2)}= R_{23}.
\end{align}
Again using \eqref{eq-3.34} and the second equation of \eqref{eq-3.36}, we see that
\begin{align*}
	R_{22}\frac{\partial h_2(z)}{\partial z_1}e^{-ih_2(z)}- R_{23}e^{ih_1(z+c)}= R_{24},
\end{align*}
or,
\begin{align*}
	-R_{22}a_1e^{i(-L(c)-b^{*}_1-b^{*}_2)}-R_{23}= R_{24}e^{-ih_1(z+c)},
\end{align*}
which implies that $R_{24}$ must be $0$. Otherwise, $h_1$ must be a constant, which is a contradiction. Consequently $T_2=0$.  Hence, we get
\begin{align}\label{Eq-3.50}
	-R_{22}a_1e^{i(-L(c)-b^{*}_1-b^{*}_2)}= R_{23}.
\end{align}
Thus, it follows from \eqref{Eq-3.48}, \eqref{Eq-3.49} and \eqref{Eq-3.50} that 
\begin{align*}
	a^2_1\equiv \frac{R_{23}}{R_{21}R_{22}},\;\; e^{2i(b^{*}_1+b^{*}_2)}\equiv-\frac{R_{22}}{R_{21}R_{23}}\;\;\mbox{and}\;\; e^{2iL(c)}\equiv -1.
\end{align*}
In view of $ \eqref{eq-3.28} $ and $ \eqref{eq-3.30} $, a simple computation shows that  
\begin{align*}
	f_1(z)&=D_{11}\cos h_1(z-c) -D_{12}\sin h_1(z-c)+T_1,\\& =D_{11}\cos(L(z)-L(c)+b^{*}_1) -D_{12}\sin(L(z)-L(c)+b^{*}_1)+T_1
\end{align*}
and
\begin{align*}
	f_2(z)&=D_{11}\cos h_2(z-c) -D_{12}\sin h_2(z-c)+T_1,\\& =D_{11}\cos(-L(z)+L(c)+b^{*}_2) -D_{12}\sin(-L(z)+L(c)+b^{*}_2)+T_1,\\&  =D_{11}\cos(L(z)-L(c)-b^{*}_2)+ D_{12}\sin(L(z)-L(c)-b^{*}_2) +T_1.
\end{align*}
Assuming $ b_1=b^{*}_1 $ and $ b_2=-b^{*}_2 $, we see that $ e^{2i(b_1-b_2)}\equiv-{R_{22}}/{R_{21}R_{23}} $. Hence, the solutions take the form
\begin{align*}
	f_1(z)=D_{11}\cos(L(z)-L(c)+b_1)- D_{12}\sin(L(z)-L(c)+b_1)+T_1,\\   f_2(z) =D_{11}\cos(L(z)-L(c)+b_2) + D_{12}\sin(L(z)-L(c)+b_2) +T_1.
\end{align*}
This completes the proof.
\end{proof}

\begin{rem}
Theorem \ref{th-2.3} improves several existing results such as \cite[Theorem 1.2]{XU-CAO-MJM-2018}, \cite[Theorem 2.3]{XU-JIANG-RACSAM-2022} and \cite[Theorem 1.3]{Xu-Liu-Li-JMAA-2020} for a system of general quadratic differential-difference equations in $\mathbb{C}^n$.
\end{rem}

By the following example, we confirm that existence of the transcendental entire solutions with finite order for the system of equation in Theorem \ref{th-2.3}.
\begin{exm}
For $c=(\sqrt{3}, -2, 1)\in\mathbb{C}^3$ and $ b_1=\left(1+\ln \left((10i +\sqrt{2})/(10i -\sqrt{2})\right)\right)/(2i)$, and $ b_2=1/(2i) $, the following functions $(f_1,f_2)$, where
\begin{align*}
	f_1(z)&=\frac{4\sqrt{3}}{\sqrt{204\pm 44\sqrt{17}}}\cos\left(\frac{ \sqrt{2}}{\sqrt{3}}z_1 -\frac{\pi}{4}z_2 -\sqrt{2} z_3 -\frac{\pi}{2}
	+\frac{1}{2i}\left(1+\ln\left(\frac{10i +\sqrt{2}}{10i -\sqrt{2}}\right)\right)\right)\\&\quad - \frac{\sqrt{3}(1\pm \sqrt{17})}{\sqrt{136\mp 24\sqrt{17}}} \sin\left(\frac{ \sqrt{2}}{\sqrt{3}}z_1 -\frac{\pi}{4}z_2 -\sqrt{2} z_3 -\frac{\pi}{2}
	+\frac{1}{2i}\left(1+\ln\left(\frac{10i +\sqrt{2}}{10i -\sqrt{2}}\right)\right)\right)+ \frac{1}{2}
\end{align*}
and 
\begin{align*}
	f_2(z)&=\frac{4\sqrt{3}}{\sqrt{204\pm 44\sqrt{17}}}\cos\left(\frac{ \sqrt{2}}{\sqrt{3}}z_1 -\frac{\pi}{4}z_2 -\sqrt{2} z_3 -\frac{\pi}{2}
	+\frac{1}{2i}\right)\\&\quad + \frac{\sqrt{3}(1\pm \sqrt{17})}{\sqrt{136\mp 24\sqrt{17}}} \sin\left(\frac{ \sqrt{2}}{\sqrt{3}}z_1 -\frac{\pi}{4}z_2 -\sqrt{2} z_3 -\frac{\pi}{2}
	+\frac{1}{2i}\right)+ \frac{1}{2}
\end{align*}
is a pair of transcendental entire solutions in $\mathbb{C}^3$ of the system 
\begin{align*}
	\begin{cases}
		2f_1(z+c)^2 + 4 f_1(z+c)\dfrac{\partial f_2(z)}{\partial z_1}+ 3\left(\dfrac{\partial f_2(z)}{\partial z_1}\right)^2 -2f_1(z+c) -2\dfrac{\partial f_2(z)}{\partial z_1}=1,\vspace{1.2mm}\\ 2f_2(z+c)^2 + 4 f_2(z+c)\dfrac{\partial f_1(z)}{\partial z_1}+ 3\left(\dfrac{\partial f_1(z)}{\partial z_1}\right)^2 - 2f_2(z+c) -2\dfrac{\partial f_1(z)}{\partial z_1}=1.
	\end{cases}
\end{align*}
\end{exm}

As a consequence of Theorem \ref{th-2.3}, we derive the following corollary, which broadens the scope of \cite[Theorem 2.3]{XU-JIANG-RACSAM-2022} by allowing for arbitrary constants in the equations of the system, and also generalize from $\mathbb{C}^2$ to $\mathbb{C}^n$ for $n\geq 2$.
\begin{cor}\label{cor-2.3}
Let $  c=(c_1\ldots,,c_n)\setminus\{(0,\ldots, 0)\}\in\mathbb{C}^n $, $d_1\in\mathbb{C}\setminus\{0\}$ and $ a,b,\alpha\in\mathbb{C} $ with $\alpha^2\neq 0, ab $. Then any pair of finite order transcendental entire solutions in $\mathbb{C}^n$ for the system of partial differential-difference equations
\begin{align}\label{eq-2.5}
		\begin{cases}
			af_1(z+c)^2+2\alpha f_1(z+c)\dfrac{\partial f_2(z)}{\partial z_1}+b\left(\dfrac{\partial f_2(z)}{\partial z_1}\right)^2=1, \vspace{1.2mm}\\ af_2(z+c)^2+2\alpha f_2(z+c)\dfrac{\partial f_1(z)}{\partial z_1}+b\left(\dfrac{\partial f_1(z)}{\partial z_1}\right)^2=1,
		\end{cases}
\end{align}
	must be one of the forms
\begin{enumerate}
		\item [(i)]$ f_1(z)=A_{11}\cos[L(z)-L(c)+b_1] -A_{12}\sin[L(z)-L(c)+b_1] $,\vspace{2mm}\\ 
		$ f_2(z)=A_{11}\cos[L(z)-L(c)+b_2] -A_{12}\sin[L(z)-L(c)+b_2] $,
\end{enumerate}
where $ L(z)=\sum_{r=1}^{n} a_r z_r $ and $ b_1,b_2\in\mathbb{C} $ satisfy
	\begin{align*}
		a^2_1\equiv\frac{R_{23}}{R_{21}R_{22}},\;\;e^{2i(b_1-b_2)}\equiv 1 \;\;\mbox{and}\;\; e^{2iL(c)}\equiv\frac{R_{21}}{R_{22}R_{23}},
	\end{align*}
	\begin{align*}
		R_{21}=\frac{-A_{11}-iA_{12}}{iB_{11}-B_{12}},\;\; R_{22}=\frac{A_{11}-iA_{12}}{iB_{11}-B_{12}},\;\; R_{23}=\frac{iB_{11}+B_{12}}{iB_{11}-B_{12}}.
	\end{align*}
Also $ A_{11}, A_{12}, B_{11}\;\mbox{and}\; B_{12} $ are defined in $ (i) $ of Corollary $ \ref{cor-2.1} $.\vspace{1.2mm}
\begin{enumerate}
	\item [(ii)] $ f_1(z)=A_{11}\cos[L(z)-L(c)+b_1]- A_{12}\sin[L(z)-L(c)+b_1],\vspace{2mm}\\
	  f_2(z) =A_{11}\cos[L(z)-L(c)+b_2] + A_{12}\sin[L(z)-L(c)+b_2] $,
\end{enumerate}
where $ A_{11},A_{12} $ and $L(z)$ are stated as in $ (i) $ and $ b_1, b_2 \in\mathbb{C} $ satisfy
\begin{align*}
	a^2_1\equiv \frac{R_{23}}{R_{21}R_{22}},\;\; e^{2i(b_1-b_2)}\equiv-\frac{R_{22}}{R_{21}R_{23}}\;\;\mbox{and}\;\; e^{2iL(c)}\equiv -1,
\end{align*}
where $ R_{21},R_{22}\;\mbox{and}\;R_{23} $ are stated in $ (i) $ of Corollary \ref{cor-2.3}.
\end{cor}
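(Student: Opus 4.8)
The plan is to obtain Corollary \ref{cor-2.3} as a direct specialization of Theorem \ref{th-2.3}, so that essentially no new analysis is required. First I would observe that the system \eqref{eq-2.5} is precisely the system \eqref{eq-3.27} for the general quadratic form $\mathcal{A}(f,g)=af^2+2\alpha fg+bg^2+2\beta f+2\gamma g+C$ under the choice $\beta=\gamma=0$ and $C=-1$. Hence it suffices to evaluate all of the auxiliary quantities appearing in the statement of Theorem \ref{th-2.3} at these parameter values and verify that the resulting forms and relations collapse to those asserted in the corollary.

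The first step is to compute the determinants. With $\beta=\gamma=0$ and $C=-1$ one gets $\Delta=abC-C\alpha^2=-(ab-\alpha^2)=-D$, so that $-\Delta=D$ and the hypothesis $\Delta\neq0$ of Theorem \ref{th-2.3} becomes exactly the stated condition $\alpha^2\neq ab$. Moreover, since $T_1=(\alpha\gamma-b\beta)/(ab-\alpha^2)$ and $T_2=(\alpha\beta-a\gamma)/(ab-\alpha^2)$, both translates vanish, $T_1=T_2=0$; consequently the $+T_1$ summands in the two solution families disappear and the auxiliary requirement $T_2=0$ is satisfied identically, dropping out of the hypotheses.

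The key step is the identification of the amplitudes. Because $-\Delta=D$, the normalizing radicals simplify to $\sqrt{DA^{\pm}/(-\Delta)}=\sqrt{A^{\pm}}$ and $\sqrt{DB^{\mp}/(-\Delta)}=\sqrt{B^{\mp}}$. Reading $\sqrt{A^{\pm}}=K_{11}/\sqrt2$, $\sqrt{B^{\mp}}=K_{13}/\sqrt2$, $\xi^{\pm}_1=2\alpha/K_{12}$ and $\eta^{\pm}_1=K_{14}/K_{12}$ off the definitions, a short computation gives
\[
D_{11}=\frac{\xi^{\pm}_1}{\sqrt{A^{\pm}}}=\frac{2\sqrt2\,\alpha}{K_{11}K_{12}}=A_{11},\qquad D_{12}=\frac{\eta^{\pm}_1}{\sqrt{B^{\mp}}}=\frac{\sqrt2\,K_{14}}{K_{12}K_{13}}=A_{12},
\]
and likewise $E_{11}=\eta^{\pm}_1/\sqrt{A^{\pm}}=B_{11}$ and $E_{12}=\xi^{\pm}_1/\sqrt{B^{\mp}}=B_{12}$. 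Substituting these four identities into the definitions of $R_{21},R_{22},R_{23}$ in Theorem \ref{th-2.3} reproduces verbatim the $R_{2j}$ listed in the corollary.

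Finally I would feed all of this into the two solution families of Theorem \ref{th-2.3}: the forms reduce to $f_1(z)=A_{11}\cos[L(z)-L(c)+b_1]-A_{12}\sin[L(z)-L(c)+b_1]$ together with the matching expression for $f_2$, while the relations $a^2_1\equiv R_{23}/(R_{21}R_{22})$, $e^{2i(b_1-b_2)}\equiv1$, $e^{2iL(c)}\equiv R_{21}/(R_{22}R_{23})$ in case (i), and $e^{2i(b_1-b_2)}\equiv-R_{22}/(R_{21}R_{23})$, $e^{2iL(c)}\equiv-1$ in case (ii), carry over unchanged. I anticipate no genuine obstacle here: the whole argument is a bookkeeping specialization of the already-proved Theorem \ref{th-2.3}, and the only point that demands care is the radical simplification converting the $D$'s and $E$'s into the $A$'s and $B$'s, which I have indicated above.
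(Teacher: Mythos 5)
Your proposal is correct and coincides with the paper's own route: the paper offers no separate argument for Corollary \ref{cor-2.3}, presenting it exactly as the specialization $\beta=\gamma=0$, $C=-1$ of Theorem \ref{th-2.3}, under which $\Delta=-D$, $T_1=T_2=0$, and $D_{11},D_{12},E_{11},E_{12}$ collapse to $A_{11},A_{12},B_{11},B_{12}$ as you computed. Your verification of the radical simplifications and of the resulting $R_{2j}$ is accurate, so nothing further is needed.
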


\noindent{\bf Acknowledgment:} The authors would like to thank the referee(s) for their helpful suggestions and comments for the improvement of the exposition of the article.\\

\noindent\textbf{Compliance of Ethical Standards:}\\

\noindent\textbf{Conflict of interest.} The authors declare that there is no conflict  of interest regarding the publication of this article.\vspace{1.5mm}

\noindent\textbf{Data availability statement.}  Data sharing is not applicable to this article as no datasets were generated or analyzed during the current study.

\end{document}